\documentclass[a4paper,12pt]{amsart}

\usepackage[T1]{fontenc}
\usepackage[utf8]{inputenc}
\usepackage{lmodern}
\usepackage{amsfonts,amsmath,amssymb,mathrsfs}
\usepackage{enumitem}
\usepackage{microtype}
\usepackage[colorlinks=true,linkcolor=blue,citecolor=blue,urlcolor=blue]{hyperref}

\allowdisplaybreaks

\numberwithin{equation}{section}

\theoremstyle{plain}
\newtheorem{thm}{Theorem}[section]
\newtheorem{prop}[thm]{Proposition}
\newtheorem{cor}[thm]{Corollary}
\newtheorem{lemma}[thm]{Lemma}

\theoremstyle{definition}
\newtheorem{defi}[thm]{Definition}

\newtheorem{rem}[thm]{Remark}
\newtheorem{ex}[thm]{Example}

\newcommand{\R}{\mathbb R}
\newcommand{\Rp}{(0,\infty)}
\newcommand{\N}{\mathbb N}
\newcommand{\Q}{\mathcal Q}
\newcommand{\Qt}{\mathcal Q^{*}}

\newcommand{\one}{\mathbf 1}

\newcommand{\supp}{\operatorname{supp}}

\hypersetup{
  pdftitle={Dilation-balanced product Pitt inequalities for mixed-tail potentials},
  pdfauthor={Niyaz Tokmagambetov},
  pdfkeywords={Pitt inequality, Fourier transform, mixed norm, product weight, mixed derivative, general monotonicity, anisotropic inequality}
}

\begin{document}

\title[Dilation-balanced product Pitt inequalities]{Dilation-balanced product Pitt inequalities for mixed-tail potentials}

\author[N. Tokmagambetov]{Niyaz Tokmagambetov}
\address{Niyaz Tokmagambetov:\endgraf Institute of Mathematics and Mathematical Modeling\endgraf 28 Shevchenko str., 050010 Almaty, Kazakhstan\endgraf{\it E-mail address:} {\rm niyaz.tokmagambetov@gmail.com; tokmagambetov@math.kz}}

\date{July 2026}
\keywords{Pitt inequality; Fourier transform; mixed norm; product weight; mixed derivative; general monotonicity; anisotropic inequality}
\subjclass[2020]{42B10, 42B35, 26D10, 46E30}

\begin{abstract}
Motivated by the one-dimensional and radial theory of general monotone functions, we introduce a natural multiparameter general-monotonicity class for product Fourier inequalities.  The monotonicity condition is replaced by an intrinsic mixed-tail condition: a function on the positive orthant is the upper-tail potential of a complex Radon measure, and the integrated total variation of this measure is controlled by a local integral of the function.  The representing measure is recovered as the full mixed distributional derivative, so the condition is intrinsic rather than coordinatewise.  For coordinatewise even extensions we prove the anisotropic mixed-norm estimate
\[
 \|\Pi_{\alpha}\widehat f_{A}\|_{L_{\vec q}(\mathbb R^d)}
 \le C\|\Pi_{\beta}f\|_{L_{\vec p}(\mathbb R^d)},
 \qquad
 \beta_k=1-\frac1{p_k}-\frac1{q_k}-\alpha_k,
\]
where $1\le p_k\le q_k<\infty$ and $\alpha_k>-1/q_k$.  Here $\widehat f_A$ is an Abel-summed Fourier transform and agrees almost everywhere with the ordinary Fourier transform for $f\in L^1$.  The exponent relation is forced by independent coordinate dilations.  Thus the theorem has the same coordinatewise dilation balance as the anisotropic monotone theory, while the hypotheses are formulated in the spirit of general monotonicity: variation is controlled locally, and no product or coordinatewise monotonicity structure is imposed.  The class contains all sectorial mixed-tail potentials, an infinite-dimensional cone generated by arbitrary non-product positive measures satisfying the mixed-moment condition, and real nonseparable functions which are not monotone in the coordinate variables.  For compactly supported examples which are bounded below by a positive constant in a neighborhood of the origin, the range $-1/q_k<\alpha_k<1-1/q_k$ is exactly the range in which both weighted norms are finite.
\end{abstract}

\maketitle

\section{Introduction}

Throughout, $d\in\N$.  For $f\in L^1(\R^d)$ we use the Fourier transform
\[
 \widehat f(\xi)=\int_{\R^d}e^{i x\cdot\xi}f(x)\,dx.
\]
A product-weighted Pitt inequality compares a weighted norm of $\widehat f$ with a weighted norm of $f$.  In its anisotropic form the weights and the Lebesgue exponents may be different in the coordinate directions.  Such estimates are natural for product geometries and are closely related to rectangular Hardy operators.

Classical two-weight Fourier inequalities were developed in, among other works, \cite{Hei84,JS84,Muc83a,Muc83b,BH92,BH03}.  Product-weighted Hardy--Littlewood and Pitt inequalities, including Hardy--Ces\`aro and Hardy--Bellman transforms, were studied recently in \cite{DNTW23}; for a recent Pitt-type development involving Dirichlet kernels, see \cite{NTW26}.  A closely related line of work asks how far the monotonicity hypotheses in Fourier inequalities can be weakened.  In one dimension and in radial settings, this led to the theory of general monotone functions and to refinements of the Boas--Pitt theory \cite{LT08,LT09,LT11,GLT11,DGT13}.  In anisotropic settings, monotone Lebesgue and Lorentz versions of the Boas problem were obtained in \cite{Muk15,Muk17}.

The present paper is a product-space continuation of that general-monotone philosophy.  The aim is not to change the coordinatewise scaling law, which is already forced by dilation.  Rather, the aim is to replace coordinatewise monotonicity by a natural intrinsic condition that still supplies the cancellation and variation control needed in the Fourier estimate.  The condition should allow nonseparable functions, signed or sectorial mixed derivatives, and arbitrary coupling between coordinates.  The object below is designed for this purpose: the full mixed derivative is required to be a Radon measure, and its upper-tail total variation is controlled by a rectangular average of the function itself.

Let $g$ be a function on $\Rp^d$.  The basic object is its full mixed derivative
\[
 \mu_g=(-1)^dD_1\cdots D_d g.
\]
We require $\mu_g$ to be a complex Radon measure and $g$ to be its upper-tail potential,
\[
 g(x)=\mu_g\bigl([x_1,\infty)\times\cdots\times[x_d,\infty)\bigr).
\]
The mixed total variation above $x$ is then
\[
 W_g(x)=|\mu_g|\bigl([x_1,\infty)\times\cdots\times[x_d,\infty)\bigr).
\]
Our rectangular mixed general-monotonicity condition is the scale-invariant estimate
\begin{equation}\label{eq:intro-rmgm}
 \int_{\Q(r)}W_g(x)\,dx
 \le K\int_{\Q(\lambda r)}|g(x)|\,dx,
 \qquad r\in\Rp^d,
\end{equation}
where $\Q(r)=\prod_k(0,r_k)$ and $\lambda r$ is componentwise multiplication.  For a positive measure $\mu_g$, equality holds in \eqref{eq:intro-rmgm} with $K=1$ and $\lambda=\one$.  More generally, the condition holds for sectorial complex measures and for signed measures with controlled cancellation.

The mixed derivative provides the product cancellation that ordinary first-order bounded variation does not.  Consider the coordinatewise even extension
\[
 f(x)=g(|x_1|,\ldots,|x_d|).
\]
Repeated Fubini integration gives
\begin{equation}\label{eq:intro-transform-formula}
 \widehat f_A(\xi)
 =2^d\int_{\Rp^d}\prod_{k=1}^d\frac{\sin(t_k\xi_k)}{\xi_k}\,d\mu_g(t),
\end{equation}
for nonzero coordinate frequencies.  Consequently,
\[
 |\widehat f_A(\xi)|
 \le 2^d\int_{\Rp^d}\prod_{k=1}^d\min\{t_k,|\xi_k|^{-1}\}\,d|\mu_g|(t).
\]
Condition \eqref{eq:intro-rmgm} turns the right-hand side into a rectangular Hardy average of $|g|$.  A mixed-norm tensorization of the one-dimensional power Hardy inequality then gives the main theorem.

The resulting exponent relation is
\begin{equation}\label{eq:intro-balance}
 \beta_k=1-\frac1{p_k}-\frac1{q_k}-\alpha_k,
 \qquad k=1,\ldots,d.
\end{equation}
It is exactly the relation forced by independent dilations in each coordinate.  Thus the new class has the same coordinatewise dilation balance as the known anisotropic monotone theorems, while including genuinely nonseparable and nonmonotone examples.

\subsection*{Proof architecture and standard tools}
The proof separates four independent ingredients.  First, the mixed-tail representation identifies the full mixed distributional derivative with a Radon measure and gives the integrated variation identity.  Second, Abel summation and repeated Fubini integration convert the coordinatewise even Fourier transform into a product sine-kernel integral against this measure.  Third, rectangular mixed general monotonicity converts the resulting total-variation bound into a rectangular Hardy average of $|g|$.  Fourth, the anisotropic estimate follows by tensorizing the one-dimensional power Hardy inequality through Minkowski's integral inequality.

This architecture is intended to be a direct multiparameter analogue of the general-monotone method.  In the one-dimensional theory, monotonicity can be replaced by local control of variation.  Here the full mixed derivative plays the role of the variation measure, and the rectangular condition \eqref{eq:intro-rmgm} is the product analogue of that local control.  The point is that cancellation is measured in all coordinates simultaneously, so the class is not forced to be a product of one-dimensional general-monotone functions.

\begin{rem}[Standard analytic tools]
We use the standard forms of Fubini--Tonelli, dominated convergence, Young's convolution inequality on $\mathbb R$, Minkowski's integral inequality, mixed-norm tensorization, Radon-measure total variation and polar decomposition, the distributional derivative calculus for Radon measures on the open orthant, Abel dominated convergence for the sine-kernel formula, agreement of Abel summation with the ordinary Fourier transform in the $L^1$ case, and the elementary support identity for tensor products of nonzero distributions.  These are background analytic tools.  The new content is the mixed-tail general-monotonicity condition and its use in the dilation-balanced Pitt estimate.
\end{rem}

\section{Mixed norms and an anisotropic rectangular Hardy inequality}\label{sec:hardy}

For $\rho=(\rho_1,\ldots,\rho_d)\in\R^d$ and $x\in\R^d$ with
$x_1\cdots x_d\ne0$, put
\[
 \Pi_\rho(x)=\prod_{k=1}^d|x_k|^{\rho_k}.
\]
On the coordinate hyperplanes we assign any fixed measurable value, for instance
$\Pi_\rho(x)=0$.  This convention removes the ambiguities caused by negative
exponents and by $0^0$ and has no effect on any mixed norm.
For $r\in\Rp^d$ define
\[
 \Q(r)=\prod_{k=1}^d(0,r_k),
 \qquad
 \Qt(r)=\prod_{k=1}^d[r_k,\infty).
\]
All vector inequalities and products are componentwise.

Let $\vec s=(s_1,\ldots,s_d)$ with $1\le s_k<\infty$.  The mixed norm is
\begin{align}\label{eq:mixed-norm}
 \|F\|_{L_{\vec s}(\Rp^d)}
 :=\left(\int_0^\infty\cdots
 \left(\int_0^\infty |F(x_1,\ldots,x_d)|^{s_1}\,dx_1\right)^{s_2/s_1}
 \cdots dx_d\right)^{1/s_d}.
\end{align}
The same convention is used on $\R^d$.  We call a measurable function
$F$ on $\Rp^d$ a tensor product if
$F(x)=\prod_{k=1}^d F_k(x_k)$ almost everywhere for measurable one-dimensional
factors $F_k$; otherwise $F$ is called nonseparable.  Whenever such an $F$ is
nonzero and belongs to $L^1_{\mathrm{loc}}(\Rp^d)$, the factors may be taken
nonzero and in $L^1_{\mathrm{loc}}(0,\infty)$.  Indeed, for each of the other
factors one may choose a bounded measurable set on which its absolute integral
is finite and positive; Tonelli's theorem on the resulting product set then
forces local integrability of the remaining factor.

We first recall the one-dimensional power Hardy inequality in the precise form needed below.

\begin{lemma}[Power Hardy inequality]\label{lem:power-hardy}
Let $1\le p\le q<\infty$, let $A<-1$, and set
\[
 B=\frac pq(A+1)+p-1.
\]
Then every nonnegative measurable $h$ on $(0,\infty)$ satisfies
\begin{equation}\label{eq:power-hardy}
 \left(\int_0^\infty r^A\left[\int_0^r h(t)\,dt\right]^qdr\right)^{1/q}
 \le C
 \left(\int_0^\infty r^B h(r)^p\,dr\right)^{1/p}.
\end{equation}
\end{lemma}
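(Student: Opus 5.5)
The plan is to reduce \eqref{eq:power-hardy} to the classical Muckenhoupt--Bradley characterization of two-weight Hardy inequalities for $1\le p\le q<\infty$. That characterization says: for weights $u,v\ge0$ on $(0,\infty)$, the estimate
\[
\Bigl(\int_0^\infty u(r)\Bigl[\int_0^r h\Bigr]^q dr\Bigr)^{1/q}\le C\Bigl(\int_0^\infty v\,h^p\Bigr)^{1/p}
\]
holds for all nonnegative $h$ if and only if
\[
\mathcal B:=\sup_{r>0}\Bigl(\int_r^\infty u\Bigr)^{1/q}\Bigl(\int_0^r v^{1-p'}\Bigr)^{1/p'}<\infty ,
\]
with $\|v^{-1}\|_{L^\infty(0,r)}$ in place of the second factor when $p=1$. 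Moreover $C\le c_{p,q}\mathcal B$. I will apply this with $u(r)=r^A$ and $v(r)=r^B$, so the whole proof comes down to checking that $\mathcal B$ is finite. The point is that the definition of $B$ is exactly what makes the Muckenhoupt quantity scale-invariant, and hence constant in $r$.

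The first step is the tail factor. Since $A<-1$, we have $\int_r^\infty s^A\,ds=r^{A+1}/|A+1|$, so this factor equals $c\,r^{(A+1)/q}$.

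The second step is the local factor, for $p>1$. Here $v^{1-p'}=t^{-B/(p-1)}$. It is integrable at $0$ exactly when $B<p-1$. This holds because $B-(p-1)=\tfrac pq(A+1)<0$. Computing the integral gives $\bigl(\int_0^r t^{-B/(p-1)}dt\bigr)^{1/p'}=c\,r^{(p-1-B)/p}=c\,r^{-(A+1)/q}$.

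The third step combines the two. The product of the factors is the constant $c$, so $\mathcal B<\infty$. The case $p=1$ works the same way: then $B=(A+1)/q<0$, so $t^{-B}$ is increasing and $\operatorname{ess\,sup}_{t<r}t^{-B}=r^{-B}$, which again cancels $r^{(A+1)/q}$.

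For $p=1$ I would in fact give a self-contained argument instead of citing the criterion. Write $\int_0^r h=\int_0^\infty h(t)\one_{t<r}\,dt$ and apply Minkowski's integral inequality in $L^q(r^A\,dr)$. This gives
\[
\text{LHS}\le\int_0^\infty h(t)\Bigl(\int_t^\infty r^A\,dr\Bigr)^{1/q}dt=|A+1|^{-1/q}\int_0^\infty t^{B}h(t)\,dt .
\]
For $1<p\le q$, if a self-contained proof is preferred over citing Bradley, I would use the standard Hölder trick. Split $h=\bigl(h\,t^{\gamma}\bigr)\,t^{-\gamma}$ with a small auxiliary exponent $\gamma$, apply Hölder on $(0,r)$, and then use Minkowski's inequality in $L^{q/p}$ (valid since $q/p\ge1$) to interchange the integrals.

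The main obstacle is only bookkeeping: choosing $\gamma$ so that every power integral converges at the relevant endpoint. This requires $\gamma p'<1$ on the inner integral and $A+1+\dots<0$ on the outer tail. The margin $B<p-1$ guarantees that an admissible $\gamma$ exists. Since the Bradley criterion is standard and listed among the background tools in spirit, I would simply cite it and record the two power computations above.
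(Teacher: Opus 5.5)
Your proof is correct, but it takes a different route from the paper's. The paper never invokes a two-weight criterion. It substitutes $r=e^s$, $t=e^u$, puts $\delta=-(A+1)/q>0$ and $H(u)=e^{(B+1)u/p}h(e^u)$, and observes that the definition of $B$ is exactly $(B+1)/p=1-\delta$. This rewrites the left side of \eqref{eq:power-hardy} as $\|k*H\|_{L^q(\R)}$ and the right side as $\|H\|_{L^p(\R)}$, where $k(v)=e^{-\delta v}\mathbf 1_{(0,\infty)}(v)$. Young's inequality with $1+1/q=1/p+1/r_0$ then finishes the proof, since $k\in L^{r_0}(\R)$.

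This is the same scale invariance you detect when the Muckenhoupt--Bradley quantity turns out to be constant in $r$, but the paper makes it structural: power weights become translation invariance in logarithmic variables. As a result, all $1\le p\le q$ are handled in one stroke with the explicit constant $\|k\|_{L^{r_0}}$. The resulting operator $S_\delta$ is also reused in the sharpness section to show that $p\le q$ is necessary.

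Your citation of Bradley is legitimate, and both your power computations (including the $p=1$ essential supremum) are right. That route also applies to arbitrary weights, but it imports a general characterization for what is here a one-line convolution bound. Your self-contained $p=1$ argument via Minkowski is precisely the $L^1*L^q$ case of Young after the logarithmic substitution.

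Your H\"older--Minkowski sketch for $1<p\le q$ does close. The admissible window is $\frac{A+1}{q}+\frac1{p'}<\gamma<\frac1{p'}$, which is nonempty because $A+1<0$. The resulting power of $t$ is $\gamma p+\frac pq\bigl(A+1+\frac q{p'}-q\gamma\bigr)=\frac pq(A+1)+p-1=B$, as required.
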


\begin{proof}
Put $r=e^s$, $t=e^u$, and
\[
 \delta=-\frac{A+1}{q}>0,
 \qquad
 H(u)=e^{(B+1)u/p}h(e^u).
\]
The balance relation gives $(B+1)/p=1-\delta$.  Hence
\[
 e^{(A+1)s/q}\int_0^{e^s}h(t)\,dt
 =\int_{-\infty}^s e^{-\delta(s-u)}H(u)\,du.
\]
The right-hand side is convolution on $\R$ with
$k(v)=e^{-\delta v}\mathbf1_{(0,\infty)}(v)$.  Choose $r_0\in[1,\infty)$ so that
$1+1/q=1/p+1/r_0$.  Since $1\le p\le q<\infty$, such an exponent satisfies
$1\le r_0<\infty$; when $p=1$, it is $r_0=q$.  Young's convolution inequality,
including its $L^1$ endpoint, applies because $k\in L^{r_0}(\R)$.  Undoing the
logarithmic substitution proves \eqref{eq:power-hardy}.
\end{proof}

\begin{lemma}[Inverted Hardy operator]\label{lem:inverted-hardy}
Let $1\le p\le q<\infty$, $\lambda>0$, and $\alpha>-1/q$.  Define
\begin{equation}\label{eq:one-dimensional-beta}
 \beta=1-\frac1p-\frac1q-\alpha.
\end{equation}
Then
\begin{equation}\label{eq:inverted-hardy}
 \left(\int_0^\infty
 \left[\xi^\alpha\int_0^{\lambda/\xi}h(x)\,dx\right]^q d\xi\right)^{1/q}
 \le C
 \left(\int_0^\infty [x^\beta h(x)]^pdx\right)^{1/p}
\end{equation}
for every nonnegative measurable $h$.
\end{lemma}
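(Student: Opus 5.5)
The plan is to reduce \eqref{eq:inverted-hardy} to Lemma~\ref{lem:power-hardy} by the change of variables $r=\lambda/\xi$ in the outer integral. With $\xi=\lambda/r$ we have $d\xi=\lambda r^{-2}\,dr$ and $\xi^{\alpha q}=\lambda^{\alpha q}r^{-\alpha q}$. The left-hand side of \eqref{eq:inverted-hardy} raised to the power $q$ therefore becomes
\[
 \lambda^{\alpha q+1}\int_0^\infty r^{-\alpha q-2}\left[\int_0^r h(x)\,dx\right]^q dr .
\]
This is exactly the left-hand side of \eqref{eq:power-hardy} with $A=-\alpha q-2$, multiplied by a constant depending only on $\lambda$, $\alpha$ and $q$.

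Next we check the hypothesis of Lemma~\ref{lem:power-hardy}. The condition $A<-1$ is equivalent to $-\alpha q<1$, that is $\alpha>-1/q$, which is precisely our assumption.

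We then compute the exponent $B$ supplied by the lemma. Since $A+1=-\alpha q-1$, we get
\[
 B=\frac pq(-\alpha q-1)+p-1=-\alpha p-\frac pq+p-1
 =p\Bigl(1-\frac1p-\frac1q-\alpha\Bigr)=p\beta .
\]
Here the last equality uses \eqref{eq:one-dimensional-beta}. Hence $r^Bh(r)^p=[r^\beta h(r)]^p$, and the right-hand side of \eqref{eq:power-hardy} is exactly the right-hand side of \eqref{eq:inverted-hardy}.

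Combining these steps gives \eqref{eq:inverted-hardy} with constant $C\lambda^{\alpha+1/q}$, where $C$ is the constant of Lemma~\ref{lem:power-hardy}. No step is a real obstacle; the only care needed is the Jacobian bookkeeping in the substitution and the verification that $B=p\beta$. The change of variables is valid for nonnegative measurable functions by Tonelli, including the case where both sides are infinite.
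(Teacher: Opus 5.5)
Your proposal is correct and follows the paper's proof: the substitution $r=\lambda/\xi$, Lemma~\ref{lem:power-hardy} applied with $A=-\alpha q-2<-1$, and the computation $B=p\beta$. Your explicit constant $\lambda^{\alpha+1/q}$ and your Tonelli justification are small details that the paper leaves implicit.
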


\begin{proof}
Set $r=\lambda/\xi$.  The $q$th power of the left-hand side is
\[
 \lambda^{\alpha q+1}
 \int_0^\infty r^{-\alpha q-2}
 \left[\int_0^r h(x)\,dx\right]^qdr.
\]
Apply Lemma~\ref{lem:power-hardy} with $A=-\alpha q-2<-1$.  The corresponding input exponent is
\[
 B=\frac pq(-\alpha q-1)+p-1
 =p\left(1-\frac1p-\frac1q-\alpha\right)=p\beta.
\]
\end{proof}

\begin{prop}[Anisotropic mixed-norm Hardy inequality]\label{prop:mixed-hardy}
Let
\[
 1\le p_k\le q_k<\infty,
 \qquad
 \alpha_k>-\frac1{q_k},
 \qquad k=1,\ldots,d,
\]
and let $\lambda\in(0,\infty)^d$.  Put
\begin{equation}\label{eq:mixed-beta}
 \beta_k=1-\frac1{p_k}-\frac1{q_k}-\alpha_k.
\end{equation}
Then every nonnegative measurable $h$ on $\Rp^d$ satisfies
\begin{equation}\label{eq:mixed-hardy}
 \left\|
 \Pi_\alpha(\xi)
 \int_{\Q(\lambda/\xi)}h(x)\,dx
 \right\|_{L_{\vec q}(\Rp^d)}
 \le C
 \|\Pi_\beta h\|_{L_{\vec p}(\Rp^d)},
\end{equation}
where $\lambda/\xi=(\lambda_1/\xi_1,\ldots,\lambda_d/\xi_d)$.
\end{prop}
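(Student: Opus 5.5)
We prove \eqref{eq:mixed-hardy} by induction on $d$, peeling off one coordinate at a time. Lemma~\ref{lem:inverted-hardy} handles the one-dimensional operator, and Minkowski's integral inequality lets us move the outer mixed norms past the inner integrals. The key structural fact is that the operator
\[
 T h(\xi)=\Pi_\alpha(\xi)\int_{\Q(\lambda/\xi)}h(x)\,dx
\]
is a tensor product of the one-dimensional positive operators
\[
 T_k u(\xi_k)=\xi_k^{\alpha_k}\int_0^{\lambda_k/\xi_k}u(x_k)\,dx_k,
\]
each acting in its own variable. Since $h\ge0$, Tonelli lets us perform the integrations in any order.

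The case $d=1$ is exactly Lemma~\ref{lem:inverted-hardy}. For the inductive step, write $\xi=(\xi_1,\xi')$ and $x=(x_1,x')$. Define
\[
 G(\xi_1,x')=\xi_1^{\alpha_1}\int_0^{\lambda_1/\xi_1}h(x_1,x')\,dx_1 .
\]
Then $Th(\xi)=T'[G(\xi_1,\cdot)](\xi')$, where $T'$ is the $(d-1)$-dimensional operator built from $\alpha',\lambda'$.

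First take the $L^{q_1}$ norm in $\xi_1$ with $\xi'$ fixed. The expression $T'[G(\xi_1,\cdot)](\xi')$ is an integral in $x'$ of $G(\xi_1,x')$ against the nonnegative kernel $\Pi_{\alpha'}(\xi')\mathbf 1_{\Q(\lambda'/\xi')}(x')$, and $q_1\ge1$. Minkowski's integral inequality therefore gives
\[
 \|Th(\cdot,\xi')\|_{L^{q_1}_{\xi_1}}\le T'\bigl[\|G(\cdot,x')\|_{L^{q_1}_{\xi_1}}\bigr](\xi').
\]
Lemma~\ref{lem:inverted-hardy} in $x_1$ then yields
\[
 \|G(\cdot,x')\|_{L^{q_1}}\le C\,\Phi(x'),\qquad \Phi(x')=\|x_1^{\beta_1}h(x_1,x')\|_{L^{p_1}_{x_1}}.
\]
Since $T'$ is positive, hence monotone, we obtain
\[
 \|Th(\cdot,\xi')\|_{L^{q_1}}\le C\,T'\Phi(\xi').
\]

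Next apply the remaining norms $L_{(q_2,\dots,q_d)}$ in $\xi'$. The induction hypothesis gives
\[
 \|T'\Phi\|_{L_{\vec q'}}\le C\|\Pi_{\beta'}\Phi\|_{L_{\vec p'}}.
\]
However, the full right side of \eqref{eq:mixed-hardy} is not $\|\Pi_{\beta'}\Phi\|_{L_{\vec p'}}$ as it stands, because $\Phi$ already carries the $x_1$-norm taken first. In fact the two agree: $\Pi_{\beta'}(x')\Phi(x')=\|\Pi_\beta h(\cdot,x')\|_{L^{p_1}}$, and taking the $L_{\vec p'}$ norm in $x'$ of this is exactly the iterated definition \eqref{eq:mixed-norm} of $\|\Pi_\beta h\|_{L_{\vec p}}$. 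So the induction closes.

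The main obstacle is the order of the norms. The $\vec q$ norm integrates $\xi_1$ innermost, so the first step needs the inner $L^{q_1}$ norm to pass through the outer operator $T'$, which is Minkowski with $q_1\ge1$. No interchange of the form $L^{p_k}$ versus $L^{q_j}$ with $p\le q$ across different coordinates is ever required, because at each stage the Hardy step converts $q_k$ into $p_k$ in the same innermost variable. One must check this carefully in the induction: the inductive hypothesis has to be applied to the function $\Phi$, which is already the inner $p_1$-norm. No extra Minkowski step on the $p$ side is needed, since the ordering of the $\vec p$ norm matches. The constant is $C=\prod_k C_k(p_k,q_k,\alpha_k,\lambda_k)$.
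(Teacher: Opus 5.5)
Your proof is correct and follows essentially the paper's route: you tensorize the one-dimensional bound of Lemma~\ref{lem:inverted-hardy} using only positivity and Minkowski's integral inequality, respecting the order of the mixed norm. The only cosmetic difference is the placement of the Minkowski step and the organization. You apply Minkowski to the output norm $L^{q_1}_{\xi_1}$ before the Hardy step and close the argument by induction on $d$; the paper applies the $T_1$ estimate first and then uses Minkowski on the input norm $L^{p_1}_{x_1}$ to pull the remaining positive operators outside.
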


\begin{proof}
Set $H(x)=\Pi_\beta(x)h(x)$.  For each coordinate define the positive integral operator
\[
 (T_k\varphi)(\xi_k)
 =\xi_k^{\alpha_k}
 \int_0^{\lambda_k/\xi_k}x_k^{-\beta_k}\varphi(x_k)\,dx_k.
\]
Lemma~\ref{lem:inverted-hardy} gives
$T_k:L^{p_k}(0,\infty)\to L^{q_k}(0,\infty)$.
Moreover, the left-hand side of \eqref{eq:mixed-hardy} is the mixed norm of
$T_1\cdots T_dH$.

We tensorize in the order of the mixed norm.  For $d=2$, the estimate for $T_1$ first gives
\[
 \|T_1T_2H\|_{L^{q_2}_{\xi_2}(L^{q_1}_{\xi_1})}
 \le C_1\|T_2H\|_{L^{q_2}_{\xi_2}(L^{p_1}_{x_1})}.
\]
Since $T_2$ is positive, Minkowski's integral inequality yields, for every $\xi_2$,
\[
 \|T_2H(\cdot,\xi_2)\|_{L^{p_1}_{x_1}}
 \le T_2\bigl(\|H(\cdot,x_2)\|_{L^{p_1}_{x_1}}\bigr)(\xi_2).
\]
The scalar $L^{p_2}\to L^{q_2}$ estimate then gives the desired two-variable bound.  The same argument applies inductively: after the first $j-1$ scalar estimates, Minkowski moves the positive operator $T_j$ outside the Banach mixed norm $L^{p_{j-1}}\cdots L^{p_1}$, and the scalar $L^{p_j}\to L^{q_j}$ estimate is applied.  This proves \eqref{eq:mixed-hardy}.
\end{proof}

\begin{rem}[Tensorization convention]\label{rem:tensorization-convention}
The proof uses only positivity of the one-dimensional operators and the order in which the mixed norm is taken.  Thus no interpolation of mixed-norm spaces is hidden in Proposition~\ref{prop:mixed-hardy}; the estimate is obtained by iterating scalar Hardy bounds and moving the remaining positive integral operators through the already-estimated variables by Minkowski's inequality.  The argument includes the endpoint $p_k=1$, since both Young's convolution inequality and Minkowski's integral inequality remain valid in $L^1$.
\end{rem}

\section{Mixed-tail potentials and the Abel Fourier transform}\label{sec:mixed-class}

\begin{defi}[Upper-tail potential]\label{def:tail-potential}
A measurable function $g:\Rp^d\to\mathbb C$ is an upper-tail potential if there is a complex Radon measure $\mu_g$ on $\Rp^d$ such that
\begin{equation}\label{eq:tail-potential}
 g(x)=\mu_g(\Qt(x))
 \quad\text{for almost every }x\in\Rp^d,
\end{equation}
and
\begin{equation}\label{eq:mixed-moment-finite}
 \mathcal M_g(r)
 :=\int_{\Rp^d}\prod_{k=1}^d\min\{r_k,t_k\}\,d|\mu_g|(t)<\infty
 \quad\text{for every }r\in\Rp^d.
\end{equation}
\end{defi}

Condition \eqref{eq:mixed-moment-finite} implies $|\mu_g|(\Qt(x))<\infty$ for every $x\in\Rp^d$, because
\[
 \mathcal M_g(x)\ge
 \left(\prod_{k=1}^d x_k\right)|\mu_g|(\Qt(x)).
\]
Thus the tail in \eqref{eq:tail-potential} is well defined.  Throughout the paper, an upper-tail potential is represented by its canonical pointwise tail version
\begin{equation}\label{eq:canonical-tail-version}
 g^{\sharp}(x):=\mu_g(\Qt(x)),
 \qquad x\in\Rp^d,
\end{equation}
and we henceforth write $g$ for $g^{\sharp}$.  This version agrees almost everywhere with the original measurable function in \eqref{eq:tail-potential}; changing to it does not affect any local integral or mixed norm.  Lemma~\ref{lem:intrinsic-mixed-derivative} shows that the representing measure, and hence the canonical version, is uniquely determined by the almost-everywhere class of $g$.

The following identity explains the form of \eqref{eq:mixed-moment-finite}.

\begin{lemma}[Integrated mixed variation]\label{lem:integrated-variation}
For every upper-tail potential and every $r\in\Rp^d$,
\begin{equation}\label{eq:integrated-variation}
 \mathcal M_g(r)
 =\int_{\Q(r)}|\mu_g|(\Qt(x))\,dx.
\end{equation}
In particular, $g\in L^1_{\mathrm{loc}}(\Rp^d)$ and
\begin{equation}\label{eq:g-below-variation}
 \int_{\Q(r)}|g(x)|\,dx\le \mathcal M_g(r).
\end{equation}
\end{lemma}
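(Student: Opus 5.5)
The identity \eqref{eq:integrated-variation} is a Tonelli computation.  Write $|\mu_g|(\Qt(x))=\int_{\Rp^d}\one_{\{t\ge x\}}\,d|\mu_g|(t)$.  The integrand $(x,t)\mapsto\one_{\{x\le t\}}$ is nonnegative and Borel measurable on $\Q(r)\times\Rp^d$, and $|\mu_g|$ is a positive Radon measure, hence $\sigma$-finite on the open orthant.  So Tonelli's theorem for Lebesgue measure times $|\mu_g|$ gives
\[
 \int_{\Q(r)}|\mu_g|(\Qt(x))\,dx
 =\int_{\Rp^d}\bigl|\{x\in\Q(r):x\le t\}\bigr|\,d|\mu_g|(t).
\]
The set inside the Lebesgue measure is the box $\prod_k(0,\min\{r_k,t_k\}]$, whose volume is $\prod_k\min\{r_k,t_k\}$.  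The right-hand side is therefore exactly $\mathcal M_g(r)$.  The identity holds in $[0,\infty]$ with no finiteness assumption, and \eqref{eq:mixed-moment-finite} then makes it finite.

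For the consequences, the canonical version satisfies the pointwise bound $|g(x)|=|\mu_g(\Qt(x))|\le|\mu_g|(\Qt(x))$ for every $x$, by the defining property of total variation.  Integrating over $\Q(r)$ and using the identity gives \eqref{eq:g-below-variation}.  Measurability of $g$ is part of the definition, and the canonical version agrees with $g$ almost everywhere.  Alternatively, measurability of $x\mapsto\mu_g(\Qt(x))$ follows from Fubini applied to the real and imaginary parts of the polar decomposition $d\mu_g=u\,d|\mu_g|$, with absolute integrability over $\Q(r)$ guaranteed by the bound just proved.  Local integrability follows because every compact subset of $\Rp^d$ lies in some $\Q(r)$ and $\mathcal M_g(r)<\infty$.

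The only step needing care is justifying Tonelli.  One needs joint measurability of the indicator of the closed set $\{(x,t):x\le t\}$, which is immediate, and $\sigma$-finiteness of $|\mu_g|$.  The latter holds because a Radon measure on $\Rp^d$ is finite on compacts, and $\Rp^d$ is $\sigma$-compact.  I expect no genuine obstacle beyond stating these points.
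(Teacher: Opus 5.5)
Your proof is correct and is essentially the paper's argument: a Tonelli exchange that computes the Lebesgue measure of $\{x\in\Q(r):x\le t\}$ as $\prod_k\min\{r_k,t_k\}$, followed by integrating the pointwise bound $|g(x)|\le|\mu_g|(\Qt(x))$. Your added justifications make explicit what the paper leaves implicit: $\sigma$-finiteness of $|\mu_g|$, Borel measurability of the indicator, and validity of the identity in $[0,\infty]$.
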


\begin{proof}
Fubini's theorem gives
\begin{align*}
 \int_{\Q(r)}|\mu_g|(\Qt(x))\,dx
 &=\int_{\Rp^d}
 \left|\{x\in\Q(r):x_k\le t_k\text{ for all }k\}\right|\,d|\mu_g|(t)\\
 &=\int_{\Rp^d}\prod_{k=1}^d\min\{r_k,t_k\}\,d|\mu_g|(t).
\end{align*}
The second assertion follows from $|g(x)|\le|\mu_g|(\Qt(x))$.
\end{proof}

\begin{lemma}[Intrinsic mixed derivative]\label{lem:intrinsic-mixed-derivative}
Let $g$ be an upper-tail potential with representing measure $\mu_g$.  Then, in the sense of distributions on $\Rp^d$,
\begin{equation}\label{eq:mixed-derivative-measure}
 \mu_g=(-1)^dD_1\cdots D_dg.
\end{equation}
In particular, the representing measure is uniquely determined by $g$.
\end{lemma}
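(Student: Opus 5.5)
We test $g$ against $D_1\cdots D_d\varphi$ for $\varphi\in C_c^\infty(\Rp^d)$ and move the derivatives onto the indicator of $\Qt(x)$ by Fubini. The aim is the identity
\[
 \int_{\Rp^d} g(x)\,(D_1\cdots D_d\varphi)(x)\,dx=(-1)^d\int_{\Rp^d}\varphi\,d\mu_g ,
\]
which is exactly \eqref{eq:mixed-derivative-measure} in the distributional sense, since $\langle D_1\cdots D_d g,\varphi\rangle=(-1)^d\int g\,D_1\cdots D_d\varphi\,dx$.

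\textbf{Step 1 (Fubini).} Choose $r,s\in\Rp^d$ with $\supp\varphi\subset\prod_k(s_k,r_k)$. Write $g(x)=\int\one_{\{t\ge x\}}\,d\mu_g(t)$. The double integral
\[
 \int_{\Q(r)}\int|D_1\cdots D_d\varphi(x)|\,\one_{\{t\ge x\}}\,d|\mu_g|(t)\,dx
\]
is at most $\|D_1\cdots D_d\varphi\|_\infty\,\mathcal M_g(r)$ by Lemma~\ref{lem:integrated-variation}, and this is finite by \eqref{eq:mixed-moment-finite}. So Fubini applies and gives
\[
 \int g\,D_1\cdots D_d\varphi\,dx=\int_{\Rp^d}\Bigl(\int_{\Q(t)}D_1\cdots D_d\varphi(x)\,dx\Bigr)\,d\mu_g(t).
\]

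\textbf{Step 2 (inner integral).} Integrate coordinatewise. Since $\varphi$ vanishes near each face $x_k=0$, each integration gives $\int_0^{t_k}D_k\psi\,dx_k=\psi(\ldots,t_k,\ldots)$. Iterating over all $d$ coordinates yields
\[
 \int_{\Q(t)}D_1\cdots D_d\varphi\,dx=\varphi(t).
\]
Hence $\int g\,D_1\cdots D_d\varphi\,dx=\int\varphi\,d\mu_g$, which is the claimed identity.

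\textbf{Step 3 (uniqueness).} Suppose $\mu$ and $\nu$ both represent $g$ almost everywhere. They give the same distribution $(-1)^dD_1\cdots D_d g$, so $\int\varphi\,d\mu=\int\varphi\,d\nu$ for all test functions $\varphi$. A complex Radon measure on the open orthant is determined by its action on $C_c^\infty$, by density in $C_c$ and the Riesz representation theorem. Therefore $\mu=\nu$, and the canonical tail version $g^\sharp$ is well defined.

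\textbf{Main obstacle.} The only delicate point is justifying Fubini in Step 1. The measure $|\mu_g|$ may be infinite near the coordinate faces, so we cannot simply bound by the total mass. The point is that the relevant region is $\{(x,t):x\in\Q(r),\,t\ge x\}$, and on it the finite mixed moment $\mathcal M_g(r)$ supplies exactly the needed integrability.
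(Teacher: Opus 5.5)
Your proposal is correct and follows essentially the same route as the paper. You justify Fubini by the bound $\|\partial_1\cdots\partial_d\varphi\|_\infty\,\mathcal M_g(r)$ from Lemma~\ref{lem:integrated-variation}, apply the fundamental theorem of calculus iteratively on $\Q(t)$ with the lower boundary terms vanishing, and get uniqueness because Radon measures are determined by their action on test functions, which is exactly the paper's argument, including the observation that the finite mixed moment, not the total mass of $|\mu_g|$, is what controls the Fubini step.
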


\begin{proof}
By Lemma~\ref{lem:integrated-variation}, $g\in L^1_{\mathrm{loc}}(\Rp^d)$.  Let $\varphi\in C_c^\infty(\Rp^d)$.  Fubini's theorem is applicable because the support of $\varphi$ is contained in some rectangle $\Q(R)$ and the resulting absolute integral is bounded by a constant multiple of $\mathcal M_g(R)$.  Using \eqref{eq:tail-potential}, we obtain
\begin{align*}
 \langle D_1\cdots D_dg,\varphi\rangle
 &=(-1)^d\int_{\Rp^d}g(x)\,\partial_1\cdots\partial_d\varphi(x)\,dx\\
 &=(-1)^d\int_{\Rp^d}
 \left[\int_{\Q(t)}\partial_1\cdots\partial_d\varphi(x)\,dx\right]d\mu_g(t)\\
 &=(-1)^d\int_{\Rp^d}\varphi(t)\,d\mu_g(t).
\end{align*}
In the last step, the fundamental theorem of calculus is applied successively in every coordinate; all lower-boundary terms vanish because $\varphi$ has compact support in the open orthant.  This proves \eqref{eq:mixed-derivative-measure}.  Uniqueness follows because two Radon measures which define the same distribution agree.
\end{proof}

\begin{defi}[Rectangular mixed general monotonicity]\label{def:rmgm}
Let $K\ge1$ and $\lambda\in[1,\infty)^d$.  An upper-tail potential belongs to
$\mathrm{RMGM}(K,\lambda)$ if
\begin{equation}\label{eq:rmgm}
 \mathcal M_g(r)
 \le K\int_{\Q(\lambda r)}|g(x)|\,dx,
 \qquad r\in\Rp^d.
\end{equation}
\end{defi}

By Lemma~\ref{lem:integrated-variation}, condition \eqref{eq:rmgm} says that the local average of the mixed total-variation tail is controlled by a slightly enlarged local average of the function.  It is an integrated no-cancellation condition for the full mixed derivative.  Unlike a coordinatewise monotonicity assumption, it permits controlled cancellation in that derivative.

For an upper-tail potential define its coordinatewise even extension away from the spatial coordinate hyperplanes by
\begin{equation}\label{eq:even-extension}
 f(x)=g(|x_1|,\ldots,|x_d|)
 \qquad\text{when }x_1\cdots x_d\ne0.
\end{equation}
On the spatial coordinate hyperplanes, choose arbitrary measurable values, for instance zero.  Whenever $g$ extends continuously or smoothly to the closed orthant, we instead use the corresponding even boundary values so that this regularity is preserved.  All such choices determine the same almost-everywhere class and hence the same Fourier integrals and mixed norms.

The ordinary Fourier integral need not be absolutely convergent.  We therefore use Abel summation.

\begin{defi}[Abel Fourier transform]\label{def:abel-transform}
For $\varepsilon\in(0,\infty)^d$ put
\[
 \widehat f_{\varepsilon}(\xi)
 =\int_{\R^d}f(x)e^{-\sum_{k=1}^d\varepsilon_k|x_k|}e^{i x\cdot\xi}\,dx.
\]
For frequencies with no zero coordinate, define
\[
 \widehat f_A(\xi)=\lim_{\varepsilon\to0+}\widehat f_{\varepsilon}(\xi),
\]
provided the limit exists.  Here $\varepsilon\to0+$ means that $\varepsilon_k\downarrow0$ for every $k$, along an arbitrary path in $(0,\infty)^d$; thus existence includes independence of that path.  On the frequency coordinate hyperplanes we set $\widehat f_A(\xi)=0$.  Whenever a product weight is used, we also define the weighted product $\Pi_\alpha(\xi)\widehat f_A(\xi)$ to be zero there, including when some $\alpha_k<0$.  These conventions do not affect any mixed-norm estimate.
\end{defi}

\begin{prop}[Mixed integration formula]\label{prop:mixed-transform-formula}
Let $g$ be an upper-tail potential and let $f$ be its even extension.  Then the Abel transform exists for every $\xi$ with no zero coordinate and
\begin{equation}\label{eq:mixed-transform-formula}
 \widehat f_A(\xi)
 =2^d\int_{\Rp^d}
 \prod_{k=1}^d\frac{\sin(t_k\xi_k)}{\xi_k}\,d\mu_g(t).
\end{equation}
Consequently,
\begin{equation}\label{eq:pointwise-mixed-bound}
 |\widehat f_A(\xi)|
 \le2^d\mathcal M_g(|\xi|^{-1}),
\end{equation}
where $|\xi|^{-1}=(|\xi_1|^{-1},\ldots,|\xi_d|^{-1})$.
The right-hand side of \eqref{eq:mixed-transform-formula}, viewed as a function of $\xi$, is continuous on every compact subset of $\{\xi\in\R^d:\xi_1\cdots\xi_d\ne0\}$; in particular, $\widehat f_A$ is measurable.  If $f\in L^1(\R^d)$, then $\widehat f_A=\widehat f$ almost everywhere.
\end{prop}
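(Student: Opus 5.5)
Fix $\xi$ with no zero coordinate and $\varepsilon\in(0,\infty)^d$. Since $f$ is coordinatewise even, the damped integral splits over the $2^d$ orthants:
\[
 \widehat f_\varepsilon(\xi)=2^d\int_{\Rp^d}g(x)\prod_{k=1}^d e^{-\varepsilon_kx_k}\cos(x_k\xi_k)\,dx .
\]
Absolute convergence is not yet clear, since $g$ is only known to be locally integrable. So we first insert $g(x)=\mu_g(\Qt(x))$ and justify Fubini. Its absolute integrand is bounded by
\[
 \int_{\Rp^d}\int_{\Q(t)}\prod_k e^{-\varepsilon_kx_k}\,dx\,d|\mu_g|(t)
 =\int_{\Rp^d}\prod_k\frac{1-e^{-\varepsilon_kt_k}}{\varepsilon_k}\,d|\mu_g|(t).
\]
From $(1-e^{-\varepsilon s})/\varepsilon\le\min\{s,1/\varepsilon\}$ this is at most $\mathcal M_g(1/\varepsilon)<\infty$. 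Interchanging the order of integration then gives
\[
 \widehat f_\varepsilon(\xi)=2^d\int_{\Rp^d}\prod_{k=1}^d\Phi_{\varepsilon_k,\xi_k}(t_k)\,d\mu_g(t),
 \qquad
 \Phi_{\varepsilon,\eta}(s)=\int_0^s e^{-\varepsilon x}\cos(\eta x)\,dx .
\]
Here $\Phi_{\varepsilon,\eta}(s)=\operatorname{Re}\bigl[(1-e^{(i\eta-\varepsilon)s})/(\varepsilon-i\eta)\bigr]$ has an explicit closed form.

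\textbf{Passing to the limit (main obstacle).} We need a bound on $\Phi$ that is uniform in $\varepsilon$ and integrable against $|\mu_g|$. I would aim for
\[
 |\Phi_{\varepsilon,\eta}(s)|\le C\min\{s,|\eta|^{-1}\},
\]
uniformly in $\varepsilon>0$, with $C$ absolute.
\begin{itemize}
 \item The bound by $s$ is trivial.
 \item For the bound by $|\eta|^{-1}$, use the closed form: $|1-e^{(i\eta-\varepsilon)s}|\le2$ and $|\varepsilon-i\eta|\ge|\eta|$, so the modulus is at most $2/|\eta|$.
\end{itemize}
Hence the product is dominated by $C^d\prod_k\min\{t_k,|\xi_k|^{-1}\}$. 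This function is $|\mu_g|$-integrable by \eqref{eq:mixed-moment-finite} with $r=|\xi|^{-1}$, and the bound is independent of $\varepsilon$. Pointwise, $\Phi_{\varepsilon,\eta}(s)\to\sin(\eta s)/\eta$ as $\varepsilon\to0$. Dominated convergence, applied along an arbitrary path $\varepsilon\to0+$, gives \eqref{eq:mixed-transform-formula} and shows the limit is path-independent. Estimate \eqref{eq:pointwise-mixed-bound} follows from $|\sin(t\eta)/\eta|\le\min\{t,|\eta|^{-1}\}$.

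\textbf{Continuity.} Let $\xi$ range over a compact set $L$ avoiding the coordinate hyperplanes, so $|\xi_k|\ge\delta>0$ on $L$. The integrand is continuous in $\xi$ for each fixed $t$. It is dominated by $\prod_k\min\{t_k,\delta^{-1}\}$, which is integrable because $\mathcal M_g(\delta^{-1}\one)<\infty$. Dominated convergence gives continuity on $L$. Measurability of $\widehat f_A$ follows, since the complement of the hyperplanes is open and the hyperplanes are null.

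\textbf{The $L^1$ case.} If $f\in L^1(\R^d)$, dominated convergence applied directly to $\widehat f_\varepsilon$, with dominating function $|f|$, shows $\widehat f_\varepsilon(\xi)\to\widehat f(\xi)$ for every $\xi$. This limit agrees with $\widehat f_A(\xi)$ off the coordinate hyperplanes, hence almost everywhere.
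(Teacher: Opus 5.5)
Your proposal is correct and follows the paper's proof essentially step by step. It uses the same Fubini justification via the bound $\mathcal M_g(\varepsilon^{-1})$, the same $\varepsilon$-uniform majorant $|\Phi_{\varepsilon,\eta}(s)|\le 2\min\{s,|\eta|^{-1}\}$ feeding dominated convergence (hence path independence), the same dominated-convergence argument for continuity on compacts, and the same direct argument in the $L^1$ case; the only difference is that you derive the $2/|\eta|$ bound from the closed form of $\Phi_{\varepsilon,\eta}$, which the paper simply asserts.
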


\begin{proof}
Evenness gives
\[
 \widehat f_{\varepsilon}(\xi)
 =2^d\int_{\Rp^d}g(x)e^{-\varepsilon\cdot x}
 \prod_{k=1}^d\cos(x_k\xi_k)\,dx.
\]
The integral is absolutely convergent because, by \eqref{eq:tail-potential} and Fubini,
\[
 \int_{\Rp^d}e^{-\varepsilon\cdot x}|g(x)|\,dx
 \le\int_{\Rp^d}\prod_{k=1}^d
 \left(\int_0^{t_k}e^{-\varepsilon_kx_k}\,dx_k\right)d|\mu_g|(t)
 \le\mathcal M_g(\varepsilon^{-1}).
\]
A second application of Fubini yields
\begin{equation}\label{eq:epsilon-formula}
 \widehat f_{\varepsilon}(\xi)
 =2^d\int_{\Rp^d}\prod_{k=1}^d
 \Phi_{\varepsilon_k,\xi_k}(t_k)\,d\mu_g(t),
\end{equation}
where
\[
 \Phi_{\varepsilon,\xi}(t)=\int_0^t e^{-\varepsilon x}\cos(\xi x)\,dx.
\]
For $\xi\ne0$,
\[
 |\Phi_{\varepsilon,\xi}(t)|
 \le \min\left\{t,\frac{2}{|\xi|}\right\}
 \le2\min\{t,|\xi|^{-1}\}.
\]
Thus the integrand in \eqref{eq:epsilon-formula} is dominated by an integrable function, in view of \eqref{eq:mixed-moment-finite}.  Since
$\Phi_{\varepsilon,\xi}(t)\to\sin(t\xi)/\xi$, dominated convergence proves \eqref{eq:mixed-transform-formula}; the same majorant works for every path along which all $\varepsilon_k\downarrow0$, so the limit is path independent.  The inequality $|\sin u|\le\min\{|u|,1\}$ gives \eqref{eq:pointwise-mixed-bound}.

Let $E$ be a compact subset of $\{\xi:\xi_1\cdots\xi_d\ne0\}$.  There are numbers $\delta_k>0$ such that $|\xi_k|\ge\delta_k$ on $E$.  The integrand in \eqref{eq:mixed-transform-formula} is then bounded uniformly for $\xi\in E$ by
\[
 \prod_{k=1}^d\min\{t_k,\delta_k^{-1}\},
\]
which is $|\mu_g|$-integrable by \eqref{eq:mixed-moment-finite}.  A further application of dominated convergence proves continuity on $E$.  Finally, if $f\in L^1$, ordinary dominated convergence in the defining Fourier integral shows that Abel summation agrees with $\widehat f$ away from the coordinate hyperplanes, and hence almost everywhere.
\end{proof}

\section{The dilation-balanced anisotropic Pitt theorem}\label{sec:main}

\begin{thm}[Mixed-norm product Pitt inequality]\label{thm:main}
Let
\[
 1\le p_k\le q_k<\infty,
 \qquad
 \alpha_k>-\frac1{q_k},
 \qquad k=1,\ldots,d,
\]
and define
\begin{equation}\label{eq:main-beta}
 \beta_k=1-\frac1{p_k}-\frac1{q_k}-\alpha_k.
\end{equation}
Let $g\in\mathrm{RMGM}(K,\lambda)$, and let $f$ be its coordinatewise even extension.  If
$\Pi_\beta f\in L_{\vec p}(\R^d)$, then
\begin{equation}\label{eq:main-pitt}
 \|\Pi_\alpha\widehat f_A\|_{L_{\vec q}(\R^d)}
 \le C
 \|\Pi_\beta f\|_{L_{\vec p}(\R^d)}.
\end{equation}
The constant depends only on $d,\vec p,\vec q,\alpha,K$, and $\lambda$.  If $f\in L^1(\R^d)$, the Abel transform in \eqref{eq:main-pitt} may be replaced by the ordinary Fourier transform.
\end{thm}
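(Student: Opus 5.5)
The plan is to chain the pointwise bound of Proposition~\ref{prop:mixed-transform-formula}, the RMGM condition, and the anisotropic Hardy inequality of Proposition~\ref{prop:mixed-hardy}.

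\emph{Step 1: pointwise bound by a rectangular average.} Fix $\xi$ with no zero coordinate. By \eqref{eq:pointwise-mixed-bound} and then \eqref{eq:rmgm} with $r=|\xi|^{-1}$, we get
\[
 |\widehat f_A(\xi)|\le 2^d\mathcal M_g(|\xi|^{-1})\le 2^dK\int_{\Q(\lambda/|\xi|)}|g(x)|\,dx .
\]
Measurability of $\widehat f_A$ is guaranteed by Proposition~\ref{prop:mixed-transform-formula}. The coordinate hyperplanes are null sets and carry the zero convention, so they contribute nothing to any norm.

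\emph{Step 2: reduce to the positive orthant.} The majorant above depends only on $|\xi|$. The mixed norm over $\R^d$ of a function of $(|\xi_1|,\dots,|\xi_d|)$ equals $2^{\sum_k 1/q_k}$ times its mixed norm over $\Rp^d$, by splitting each one-dimensional integral, innermost first. Likewise, since $f$ is the even extension,
\[
 \|\Pi_\beta f\|_{L_{\vec p}(\R^d)}=2^{\sum_k1/p_k}\|\Pi_\beta g\|_{L_{\vec p}(\Rp^d)} .
\]
It therefore suffices to show
\[
 \Bigl\|\Pi_\alpha(\xi)\int_{\Q(\lambda/\xi)}|g|\Bigr\|_{L_{\vec q}(\Rp^d)}\le C\|\Pi_\beta g\|_{L_{\vec p}(\Rp^d)} .
\]
This is exactly Proposition~\ref{prop:mixed-hardy} applied to $h=|g|$, which is nonnegative and measurable. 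Its hypotheses $1\le p_k\le q_k<\infty$, $\alpha_k>-1/q_k$ and $\lambda\in(0,\infty)^d$ are all assumed. The resulting constant depends only on $d,\vec p,\vec q,\alpha,K,\lambda$, since $\beta$ is determined by the other parameters.

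\emph{Step 3: the ordinary transform.} If $f\in L^1$, Proposition~\ref{prop:mixed-transform-formula} gives $\widehat f_A=\widehat f$ almost everywhere, so the norms coincide.

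\emph{Main obstacle.} The only delicate point is the bookkeeping with conventions. One must check that the hypothesis $\Pi_\beta f\in L_{\vec p}$ is not needed for finiteness but only makes the right-hand side meaningful; if it is infinite the inequality is trivial. One must also check that null sets and the canonical tail version do not affect the norms. Beyond that, the inequality $|\sin u|\le\min\{|u|,1\}$ and Hardy tensorization already do all the analytic work.
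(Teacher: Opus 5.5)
Your proposal is correct and follows the paper's own proof: the pointwise bound \eqref{eq:pointwise-mixed-bound}, then \eqref{eq:rmgm} at $r=|\xi|^{-1}$, then Proposition~\ref{prop:mixed-hardy} applied to $h=|g|$, and finally folding over the $2^d$ orthants by evenness. Your explicit orthant factor $2^{\sum_k 1/q_k}$ and your use of a majorant depending only on $|\xi|$, instead of evenness of $\widehat f_A$ itself, spell out the same folding step in slightly more detail.
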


\begin{proof}
By Proposition~\ref{prop:mixed-transform-formula} and \eqref{eq:rmgm}, for positive coordinate frequencies,
\[
 |\widehat f_A(\xi)|
 \le2^dK\int_{\Q(\lambda/\xi)}|g(x)|\,dx.
\]
Proposition~\ref{prop:mixed-hardy} therefore gives
\[
 \|\Pi_\alpha\widehat f_A\|_{L_{\vec q}(\Rp^d)}
 \le C\|\Pi_\beta g\|_{L_{\vec p}(\Rp^d)}.
\]
Both $f$ and $\widehat f_A$ are even in each coordinate.  Folding the mixed norms over the $2^d$ orthants changes only the constant and proves \eqref{eq:main-pitt}.
\end{proof}

\begin{cor}[Equal-exponent formulation]\label{cor:equal-exponents}
Let $1\le p\le q<\infty$, let $\alpha\in\R^d$ satisfy $\alpha_k>-1/q$, and put
\[
 \beta_k=1-\frac1p-\frac1q-\alpha_k.
\]
Then every coordinatewise even extension $f$ of a function in $\mathrm{RMGM}(K,\lambda)$ for which $\Pi_\beta f\in L^p(\R^d)$ satisfies
\[
 \left(\int_{\R^d}\Pi_\alpha(\xi)^q|\widehat f_A(\xi)|^q\,d\xi\right)^{1/q}
 \le C
 \left(\int_{\R^d}\Pi_\beta(x)^p|f(x)|^p\,dx\right)^{1/p}.
\]
\end{cor}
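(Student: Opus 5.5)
The plan is to specialize Theorem~\ref{thm:main} to $p_k=p$ and $q_k=q$ for every $k$, and then check that the mixed norms reduce to ordinary weighted Lebesgue norms. With these choices, the hypotheses $1\le p_k\le q_k<\infty$ and $\alpha_k>-1/q_k$ become exactly $1\le p\le q<\infty$ and $\alpha_k>-1/q$. The balance relation \eqref{eq:main-beta} becomes $\beta_k=1-\frac1p-\frac1q-\alpha_k$, which is the exponent prescribed in the corollary.

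First I will verify that, for a constant exponent vector $\vec s=(s,\ldots,s)$ with $1\le s<\infty$, the definition \eqref{eq:mixed-norm} gives
\[
 \|F\|_{L_{\vec s}(\R^d)}=\left(\int_{\R^d}|F(x)|^s\,dx\right)^{1/s}.
\]
At each stage the inner power $s_{k+1}/s_k$ equals $1$, so the iterated integral collapses. Tonelli's theorem for the nonnegative measurable function $|F|^s$ then identifies the iterated integral with the integral over $\R^d$.

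Applying this with $F=\Pi_\beta f$ and $s=p$, the hypothesis $\Pi_\beta f\in L^p(\R^d)$ is the same as $\Pi_\beta f\in L_{\vec p}(\R^d)$. Theorem~\ref{thm:main} then applies, and its conclusion \eqref{eq:main-pitt} bounds $\|\Pi_\alpha\widehat f_A\|_{L_{\vec q}(\R^d)}$. Applying the same identification with $F=\Pi_\alpha\widehat f_A$ and $s=q$ converts that bound into the stated inequality. The weights are nonnegative, so $\Pi_\alpha^q|\widehat f_A|^q=|\Pi_\alpha\widehat f_A|^q$. The convention that $\Pi_\alpha\widehat f_A=0$ on the frequency hyperplanes only affects a null set, so it changes nothing.

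The only point needing care is measurability of $\Pi_\alpha\widehat f_A$, which Tonelli requires. This is supplied by Proposition~\ref{prop:mixed-transform-formula}, which gives continuity of $\widehat f_A$ off the coordinate hyperplanes. The constant $C$ is inherited from Theorem~\ref{thm:main} and depends only on $d,p,q,\alpha,K,\lambda$. I do not expect any genuine obstacle: the corollary is a direct specialization of the theorem.
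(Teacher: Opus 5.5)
Your proposal is correct and is essentially the paper's argument. The paper gives no separate proof and treats the corollary as the direct specialization $p_k=p$, $q_k=q$ of Theorem~\ref{thm:main}, with the constant-exponent mixed norms reducing to ordinary weighted $L^p$ and $L^q$ norms by Tonelli, exactly as you describe.
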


The balance in Theorem~\ref{thm:main} is not an artifact of the proof.

\begin{prop}[Coordinatewise scaling necessity]\label{prop:scaling}
The class $\mathrm{RMGM}(K,\lambda)$ is invariant under independent coordinate dilations.  Consequently, if an estimate of the form
\[
 \|\Pi_\alpha\widehat f_A\|_{L_{\vec q}}
 \le C\|\Pi_\rho f\|_{L_{\vec p}}
\]
holds uniformly for all coordinate dilates of one admissible function for which both norms are finite and nonzero, then necessarily
\[
 \rho_k=1-\frac1{p_k}-\frac1{q_k}-\alpha_k
 \quad\text{for every }k.
\]
\end{prop}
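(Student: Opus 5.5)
We will prove the proposition in two stages.

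\textbf{Stage 1: dilation invariance of the class.} For $s\in\Rp^d$ put $g_s(x)=g(sx)$, with componentwise multiplication. We will identify its representing measure as the pushforward $\mu_{g_s}=(\tau_s)_\#\mu_g$ under $\tau_s(t)=t/s$. The identity $\mu_g(\Qt(sx))=\mu_g(\tau_s^{-1}\Qt(x))$ gives the tail representation \eqref{eq:tail-potential}. Lemma~\ref{lem:intrinsic-mixed-derivative} gives uniqueness of this measure. The total variation is also pushed forward, $|\mu_{g_s}|=(\tau_s)_\#|\mu_g|$. Using $\min\{r_k,t_k/s_k\}=s_k^{-1}\min\{s_kr_k,t_k\}$ we obtain
\[
\mathcal M_{g_s}(r)=\Bigl(\prod_k s_k\Bigr)^{-1}\mathcal M_g(sr),
\]
so the moment condition \eqref{eq:mixed-moment-finite} is preserved. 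The change of variables $x\mapsto x/s$ gives
\[
\int_{\Q(\lambda r)}|g_s|=\Bigl(\prod_k s_k\Bigr)^{-1}\int_{\Q(\lambda sr)}|g|.
\]
Since $\lambda$ and $s$ multiply componentwise and commute, \eqref{eq:rmgm} for $g$ at the point $sr$ gives \eqref{eq:rmgm} for $g_s$ with the same $K$ and $\lambda$.

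\textbf{Stage 2: scaling of the two norms.} The even extension of $g_s$ is $f_s(x)=f(sx)$, and on the coordinate hyperplanes the conventions are irrelevant.

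On the Fourier side, substituting $y=sx$ in $\widehat{(f_s)}_\varepsilon$ gives
\[
\widehat{(f_s)}_\varepsilon(\xi)=\Bigl(\prod_k s_k^{-1}\Bigr)\widehat f_{\varepsilon/s}(\xi/s).
\]
Because the Abel limit is path independent (Definition~\ref{def:abel-transform}), letting $\varepsilon\to0+$ yields
\[
\widehat{(f_s)}_A(\xi)=\Bigl(\prod_k s_k^{-1}\Bigr)\widehat f_A(\xi/s).
\]
We then compute mixed norms one coordinate at a time, using the substitution $\eta_k=\xi_k/s_k$ in the $k$th integral. This gives
\[
\|\Pi_\alpha\widehat{(f_s)}_A\|_{L_{\vec q}}=\prod_k s_k^{-1+\alpha_k+1/q_k}\,\|\Pi_\alpha\widehat f_A\|_{L_{\vec q}}.
\]
The same substitution on the physical side gives
\[
\|\Pi_\rho f_s\|_{L_{\vec p}}=\prod_k s_k^{-\rho_k-1/p_k}\,\|\Pi_\rho f\|_{L_{\vec p}}.
\]
The key point is that a factor depending on $s_k$ alone passes through all the outer integrals of the iterated norm \eqref{eq:mixed-norm}.

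\textbf{Conclusion.} Suppose the estimate holds uniformly for all $f_s$, with both norms of $f$ finite and nonzero. Fix $k$ and let $s_j=1$ for $j\neq k$. We get
\[
s_k^{\gamma_k}\le C',\qquad \gamma_k=\alpha_k+\frac1{q_k}+\rho_k+\frac1{p_k}-1,
\]
for all $s_k>0$, where $C'$ is the constant $C$ times the fixed ratio of the norms of $f$. Letting $s_k\to0$ and $s_k\to\infty$ forces $\gamma_k=0$, which is exactly the stated relation for $\rho_k$.

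The main obstacle is the rigorous identification of $\mu_{g_s}$ together with the Abel-transform scaling. The former relies on Lemma~\ref{lem:intrinsic-mixed-derivative}. The latter needs the path independence of the Abel limit, since dilation replaces $\varepsilon$ by $\varepsilon/s$, a different path to $0$.
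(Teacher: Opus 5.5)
Your proposal is correct and follows essentially the same route as the paper. You identify $\mu_{g_s}$ as the push-forward with $\mu_{g_s}(E)=\mu_g(sE)$, derive $\mathcal M_{g_s}(r)=(\prod_k s_k^{-1})\mathcal M_g(sr)$ and the matching scaling of $\int_{\Q(\lambda r)}|g_s|$ to preserve $\mathrm{RMGM}(K,\lambda)$, and then use $\widehat{(f_s)}_A(\xi)=(\prod_k s_k^{-1})\widehat f_A(\xi/s)$ to compare the exponents $\alpha_k-1+1/q_k$ and $-\rho_k-1/p_k$ coordinate by coordinate. Your write-up only makes explicit what the paper leaves implicit: the path independence of the Abel limit under $\varepsilon\mapsto\varepsilon/s$, and the separate limits $s_k\to0$ and $s_k\to\infty$ with the other dilation parameters fixed.
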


\begin{proof}
For $s\in(0,\infty)^d$, set $g_s(x)=g(s_1x_1,\ldots,s_dx_d)$.  The representing measure is the push-forward satisfying $\mu_{g_s}(E)=\mu_g(sE)$.  Direct changes of variables give
\[
 \mathcal M_{g_s}(r)
 =\left(\prod_{k=1}^d s_k^{-1}\right)\mathcal M_g(sr)
\]
and
\[
 \int_{\Q(\lambda r)}|g_s(x)|\,dx
 =\left(\prod_{k=1}^d s_k^{-1}\right)
 \int_{\Q(\lambda sr)}|g(y)|\,dy.
\]
Thus \eqref{eq:rmgm} is preserved with the same constants.

The Fourier transform scales as
\[
 \widehat{f_s}_A(\xi)=\left(\prod_{k=1}^ds_k^{-1}\right)
 \widehat f_A(\xi_1/s_1,\ldots,\xi_d/s_d).
\]
The left-hand norm therefore gains the factor
$\prod_ks_k^{\alpha_k-1+1/q_k}$, whereas the right-hand norm gains
$\prod_ks_k^{-\rho_k-1/p_k}$.  Since the $s_k$ are independent, equality of the exponents is necessary.
\end{proof}

\section{Intrinsic subclasses and nonseparable examples}\label{sec:examples}

We first give a broad sufficient condition directly in terms of the polar decomposition of the mixed derivative measure.

\begin{prop}[Sectorial mixed derivatives]\label{prop:sectorial}
Let $g$ be an upper-tail potential.  Suppose there are a complex number $\omega$ with $|\omega|=1$ and a number $\kappa\in(0,1]$ such that
\begin{equation}\label{eq:sectorial}
 \operatorname{Re}\left(\omega\frac{d\mu_g}{d|\mu_g|}(t)\right)\ge\kappa
 \quad\text{for }|\mu_g|\text{-almost every }t.
\end{equation}
Then $g\in\mathrm{RMGM}(\kappa^{-1},\one)$.
In particular, every upper-tail potential of a positive measure belongs to $\mathrm{RMGM}(1,\one)$.
\end{prop}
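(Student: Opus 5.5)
The plan is to prove the pointwise bound
\[
 |\mu_g|(\Qt(x))\le\kappa^{-1}|g(x)| \qquad\text{for every } x\in\Rp^d,
\]
and then integrate it over $\Q(r)$. Lemma~\ref{lem:integrated-variation} identifies the left side after integration with $\mathcal M_g(r)$. Hence
\[
 \mathcal M_g(r)\le\kappa^{-1}\int_{\Q(r)}|g(x)|\,dx,
\]
which is exactly \eqref{eq:rmgm} with $K=\kappa^{-1}\ge1$ and $\lambda=\one$. The last assertion of the proposition is the special case $\omega=1$, $\kappa=1$: for a positive measure the density $d\mu_g/d|\mu_g|$ equals $1$ $|\mu_g|$-almost everywhere.

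For the pointwise bound, fix $x$ and use the canonical tail version \eqref{eq:canonical-tail-version}, so that $g(x)=\mu_g(\Qt(x))$ holds for every $x$. By \eqref{eq:mixed-moment-finite}, the set $\Qt(x)$ has finite $|\mu_g|$-measure, so the polar decomposition $d\mu_g=h\,d|\mu_g|$ with $|h|=1$ can be integrated over it. Since $|\omega|=1$,
\[
 |g(x)|=|\omega\,\mu_g(\Qt(x))|
 \ge\operatorname{Re}\int_{\Qt(x)}\omega h\,d|\mu_g|
 =\int_{\Qt(x)}\operatorname{Re}(\omega h)\,d|\mu_g|
 \ge\kappa\,|\mu_g|(\Qt(x)),
\]
where the last step uses \eqref{eq:sectorial}.

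No step is a real obstacle; the only point needing care is measure-theoretic. The tail sets are relatively closed in the open orthant but need not have compact closure there, so one has to check that $|\mu_g|$ is finite on them. This finiteness is exactly what \eqref{eq:mixed-moment-finite} supplies, as remarked after Definition~\ref{def:tail-potential}. Once the pointwise inequality holds everywhere, measurability of $x\mapsto|\mu_g|(\Qt(x))$ is already part of Lemma~\ref{lem:integrated-variation}, so the integration step is immediate.
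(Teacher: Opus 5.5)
Your proposal is correct and follows the paper's proof essentially verbatim: the same pointwise bound $|g(x)|\ge\operatorname{Re}(\omega g(x))\ge\kappa|\mu_g|(\Qt(x))$ via the polar decomposition, integrated over $\Q(r)$ and identified with $\mathcal M_g(r)$ by Lemma~\ref{lem:integrated-variation}. The positive-measure case is the specialization $\omega=1$, $\kappa=1$.
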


\begin{proof}
For every $x\in\Rp^d$,
\[
 |g(x)|\ge\operatorname{Re}(\omega g(x))
 =\int_{\Qt(x)}
 \operatorname{Re}\left(\omega\frac{d\mu_g}{d|\mu_g|}(t)\right)d|\mu_g|(t)
 \ge\kappa|\mu_g|(\Qt(x)).
\]
Integrating over $\Q(r)$ and using Lemma~\ref{lem:integrated-variation} proves the claim.
\end{proof}

In particular, if a nonzero positive Radon measure satisfying
\eqref{eq:mixed-moment-finite} is not a tensor product of one-dimensional
Radon measures, then its upper-tail potential is nonseparable.  Indeed, a
tensor-product representation of the potential would, by
Lemma~\ref{lem:tensor-product-derivative} below and
\eqref{eq:mixed-derivative-measure}, force its representing measure to be a
tensor product.

\begin{lemma}[Mixed derivatives of tensor products]\label{lem:tensor-product-derivative}
Let $g_k\in L^1_{\mathrm{loc}}(0,\infty)$ be nonzero and set
\[
 g(x)=\prod_{k=1}^d g_k(x_k).
\]
Then, as distributions on $\Rp^d$,
\begin{equation}\label{eq:tensor-product-derivative}
 D_1\cdots D_dg=Dg_1\otimes\cdots\otimes Dg_d.
\end{equation}
If the distribution in \eqref{eq:tensor-product-derivative} is a nonzero Radon measure, then every $Dg_k$ is a nonzero Radon measure.  If, in addition, that product measure is absolutely continuous with respect to Lebesgue measure, then every $Dg_k$ is absolutely continuous with respect to one-dimensional Lebesgue measure.  In that case, writing
$d(Dg_k)=v_k(x_k)\,dx_k$, the density of the product measure is
$\prod_{k=1}^d v_k(x_k)$ almost everywhere.
\end{lemma}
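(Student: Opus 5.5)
The identity \eqref{eq:tensor-product-derivative} comes directly from testing against tensor-product test functions and then using density. Take $\varphi=\varphi_1\otimes\cdots\otimes\varphi_d$ with $\varphi_k\in C_c^\infty(0,\infty)$. Fubini (valid since $g\in L^1_{\mathrm{loc}}$ and $\varphi$ is compactly supported) gives
\[
\langle D_1\cdots D_dg,\varphi\rangle=(-1)^d\prod_{k}\int g_k\varphi_k'\,dx_k=\prod_k\langle Dg_k,\varphi_k\rangle ,
\]
which is the action of $Dg_1\otimes\cdots\otimes Dg_d$ on $\varphi$. Finite linear combinations of tensor test functions are dense in $C_c^\infty(\Rp^d)$, and both sides are distributions, so they agree.

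For the second claim, write $\mu=D_1\cdots D_dg$, a nonzero Radon measure. A tensor product of distributions is nonzero only if every factor is nonzero (support identity), so each $Dg_k\neq0$. Fix $k$. For each $j\ne k$ choose $\psi_j\in C_c^\infty$ with $c_j:=\langle Dg_j,\psi_j\rangle\neq0$. Then
\[
\varphi_k\mapsto\Bigl(\prod_{j\ne k}c_j\Bigr)^{-1}\langle\mu,\psi_1\otimes\cdots\otimes\varphi_k\otimes\cdots\otimes\psi_d\rangle
\]
equals $\langle Dg_k,\varphi_k\rangle$. On a compact $L\subset(0,\infty)$ it is bounded by $C_L\|\varphi_k\|_\infty$, with $C_L=|\mu|(\prod_{j\ne k}\supp\psi_j\times L)\prod_{j\ne k}\|\psi_j\|_\infty/|c_j|$. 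So $Dg_k$ is a distribution of order zero, hence a Radon measure $\nu_k$ by the Riesz representation theorem. The measure $\mu$ then coincides with the product measure $\nu_1\otimes\cdots\otimes\nu_d$, since both agree on tensor test functions.

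For absolute continuity, suppose $\mu\ll$ Lebesgue. Let $N\subset(0,\infty)$ be a Lebesgue-null compact set. Then $\mu$ annihilates $\prod_{j\ne k}S_j\times N$ for compact boxes $S_j$, because this set is Lebesgue null. Evaluating the product measure gives
\[
\prod_{j\ne k}\nu_j(S_j)\,\nu_k(N)=0 .
\]
Choose the $S_j$ (intervals) so that $\nu_j(S_j)\neq0$; this is possible since $\nu_j\ne0$ and, by inner regularity, some interval has nonzero measure. Hence $\nu_k(N)=0$, and inner regularity of $|\nu_k|$ extends this to every null Borel set. One subtlety: for a complex $\nu_k$ the vanishing of $\nu_k$ on all compact null sets must be upgraded to $|\nu_k|(N)=0$. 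I would handle this by applying the argument to every Borel subset of $N$, all of which are null, which gives $|\nu_k|(N)=0$. Radon--Nikodym then yields $d\nu_k=v_k\,dx_k$ with $v_k\in L^1_{\mathrm{loc}}$. The product of these densities is a density of $\nu_1\otimes\cdots\otimes\nu_d=\mu$ by Fubini on rectangles, and the density is unique almost everywhere.

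The main obstacle is the middle step: rigorously extracting each one-dimensional factor as a Radon measure, and then identifying $\mu$ with the genuine product measure rather than only with the distributional tensor product. The normalization by nonvanishing $c_j$ handles the extraction, and uniqueness of Radon measures on tensor test functions, a determining class, handles the identification.
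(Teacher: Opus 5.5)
Your proof is correct. Its first two steps match the paper's argument exactly: testing on elementary tensors, then contracting against fixed $\psi_j$ with $c_j=\langle Dg_j,\psi_j\rangle\neq0$ to get an order-zero bound.

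The absolute-continuity step takes a different route. The paper never looks at null sets. Writing $dT=m\,dx$, it defines $v_k(x_k)=C_k^{-1}\int_{\Rp^{d-1}}m(x)\prod_{j\ne k}\psi_j(x_j)\,dx_{\widehat k}$, gets $v_k\in L^1_{\mathrm{loc}}$ from Fubini, and reads off $\langle T_k,\varphi\rangle=\int\varphi\,v_k$ directly from the same contraction identity. The factor density is thus produced explicitly as a normalized partial marginal of $m$. Radon--Nikodym is used only once, for the final identification $m=\prod_k v_k$.

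You instead first identify $\mu$ with the product measure $\nu_1\otimes\cdots\otimes\nu_d$. You then prove $|\nu_k|\ll dx_k$ by evaluating on Lebesgue-null product sets, and apply Radon--Nikodym to each factor. This proves a more structural fact: an absolutely continuous product of nonzero Radon measures has absolutely continuous factors. It costs bookkeeping the paper avoids:
\begin{enumerate}[label=(\roman*)]
\item factorization of complex product measures on bounded Borel rectangles;
\item the choice of intervals with $\nu_j(S_j)\neq0$;
\item the passage from $\nu_k$ to $|\nu_k|$.
\end{enumerate}

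Item (ii) can be dropped by reusing the $\psi_j$. Let $N$ be a Lebesgue-null Borel set contained in a compact subinterval of $(0,\infty)$. Integrating $\prod_{j\ne k}\psi_j(x_j)\,\mathbf 1_N(x_k)$ against $\mu=\nu_1\otimes\cdots\otimes\nu_d$ gives $\bigl(\prod_{j\ne k}c_j\bigr)\nu_k(N)=0$, because the integrand lives on a Lebesgue-null set. Every Borel subset of such $N$ is again of this type, so $|\nu_k|(N)=0$ follows at once.
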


\begin{proof}
Identity \eqref{eq:tensor-product-derivative} follows first on elementary test functions $\varphi(x)=\prod_k\varphi_k(x_k)$ from
\[
 \langle D_1\cdots D_dg,\varphi\rangle
 =\prod_{k=1}^d\langle Dg_k,\varphi_k\rangle,
\]
and then on all of $C_c^\infty(\Rp^d)$ by the standard tensor-product characterization of distributions.

Write $T_k=Dg_k$ and
$T=T_1\otimes\cdots\otimes T_d$.  Suppose first that $T$ is a nonzero
Radon measure.  Then none of the $T_k$ is zero.  Fix $k$, choose
$\psi_j\in C_c^\infty(0,\infty)$ such that
$c_j:=\langle T_j,\psi_j\rangle\ne0$ for every $j\ne k$, and put
$C_k=\prod_{j\ne k}c_j$.  For every
$\varphi\in C_c^\infty(0,\infty)$,
\begin{equation}\label{eq:tensor-factor-contraction}
 \langle T_k,\varphi\rangle
 =\frac{1}{C_k}
 \left\langle T,
 \varphi(x_k)\prod_{j\ne k}\psi_j(x_j)\right\rangle.
\end{equation}
If $\varphi$ is supported in a fixed compact interval $J$, the total-variation
bound for $T$ on
\[
 \prod_{j<k}\supp\psi_j\times J\times
 \prod_{j>k}\supp\psi_j
\]
shows that the right-hand side of
\eqref{eq:tensor-factor-contraction} is bounded by a constant times
$\|\varphi\|_\infty$.  Thus $T_k$ is a distribution of order zero and hence
a Radon measure.  It is nonzero by construction.

Now assume in addition that $T$ is absolutely continuous, say
$dT=m(x)\,dx$ with $m\in L^1_{\mathrm{loc}}(\Rp^d)$.  With the same test
functions, define
\[
 v_k(x_k)=\frac{1}{C_k}
 \int_{\Rp^{d-1}}m(x)\prod_{j\ne k}\psi_j(x_j)\,dx_{\widehat k},
 \qquad
 dx_{\widehat k}:=\prod_{j\ne k}dx_j.
\]
Fubini's theorem gives $v_k\in L^1_{\mathrm{loc}}(0,\infty)$, and
\eqref{eq:tensor-factor-contraction} becomes
\[
 \langle T_k,\varphi\rangle
 =\int_0^\infty \varphi(x_k)v_k(x_k)\,dx_k.
\]
Hence $T_k=v_k(x_k)\,dx_k$ is absolutely continuous.  Repeating the argument
for every $k$, the tensor product of the factor measures has density
$\prod_{k=1}^d v_k(x_k)$.  Since this tensor product is $T$, uniqueness of
Radon--Nikodym densities yields
$m(x)=\prod_{k=1}^d v_k(x_k)$ almost everywhere.
\end{proof}

\begin{lemma}[Support of a tensor-product distribution]\label{lem:tensor-product-support}
Let $U_k\subset\R$ be open and let $T_k\in\mathcal D'(U_k)$ be nonzero for
$k=1,\ldots,d$.  Then
\begin{equation}\label{eq:tensor-product-support}
 \supp(T_1\otimes\cdots\otimes T_d)
 =\supp T_1\times\cdots\times\supp T_d.
\end{equation}
\end{lemma}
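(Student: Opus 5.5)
We prove the two inclusions separately, writing $T=T_1\otimes\cdots\otimes T_d$ and $S=\supp T_1\times\cdots\times\supp T_d$. The product $S$ is closed in $U=U_1\times\cdots\times U_d$ because each $\supp T_k$ is relatively closed in $U_k$.

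\emph{Inclusion $\supp T\subset S$.} Let $x\in U\setminus S$. Then $x_k\notin\supp T_k$ for some $k$, so there is an open interval $I\ni x_k$ with $T_k$ vanishing on $I$. On the open box $V=U_1\times\cdots\times I\times\cdots\times U_d$, every elementary test function $\varphi=\prod_j\varphi_j$ with $\supp\varphi\subset V$ has $\varphi_k\in C_c^\infty(I)$. Hence
\[
 \langle T,\varphi\rangle=\prod_j\langle T_j,\varphi_j\rangle=0 .
\]
Finite linear combinations of such elementary functions are dense in $C_c^\infty(V)$; this is the standard tensor-product density, the same fact used in Lemma~\ref{lem:tensor-product-derivative}. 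Since $T$ is continuous, $T$ vanishes on $V$, and therefore $x\notin\supp T$.

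\emph{Inclusion $S\subset\supp T$.} Let $x\in S$ and let $W$ be any open neighbourhood of $x$. Choose a box $\prod_j(x_j-\delta,x_j+\delta)\subset W$. For each $j$, since $x_j\in\supp T_j$, the distribution $T_j$ does not vanish on $(x_j-\delta,x_j+\delta)$. So there is $\varphi_j\in C_c^\infty(x_j-\delta,x_j+\delta)$ with $\langle T_j,\varphi_j\rangle\ne0$. Then $\varphi=\prod_j\varphi_j$ is supported in $W$ and
\[
 \langle T,\varphi\rangle=\prod_j\langle T_j,\varphi_j\rangle\ne0 .
\]
Thus $T$ does not vanish on any neighbourhood of $x$, i.e.\ $x\in\supp T$.

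The main obstacle is the density step in the first inclusion. It can be made concrete without invoking density abstractly, using a partition of unity adapted to $I$. Choose $\chi\in C^\infty(U_k)$ with $\chi=1$ on a neighbourhood of $U_k\setminus I$ and $\chi=0$ near $x_k$. Any $\psi\in C_c^\infty(V')$, with $V'$ a smaller box around $x$ whose $k$-th side is compactly contained in the region where $\chi=0$, satisfies $\psi=(1-\chi(x_k))\psi$. Working in the $x_k$ variable with the other variables frozen, $\langle T,\psi\rangle$ is $T_k$ applied to a test function supported in $I$, and so it vanishes. This uses the iterated definition of the tensor product,
\[
 \langle T,\psi\rangle=\bigl\langle T_k,\;y\mapsto\langle \textstyle\bigotimes_{j\ne k}T_j,\psi(\ldots,y,\ldots)\rangle\bigr\rangle ,
\]
where the inner function is smooth with support in $I$. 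This iterated formula is the standard construction of the tensor product, and it makes the vanishing immediate.

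Note that the nonzero hypothesis on the $T_k$ is used only implicitly: if some $T_k=0$, the left side of \eqref{eq:tensor-product-support} is empty while the right side is also empty. The hypothesis is therefore harmless here, though it is the case relevant for Lemma~\ref{lem:tensor-product-derivative}.
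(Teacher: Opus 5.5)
Your proof is correct and follows the paper's argument: it proves the same two inclusions, and the reverse inclusion uses exactly the paper's product test functions $\prod_j\varphi_j$ with $\langle T_j,\varphi_j\rangle\ne0$. For the forward inclusion you additionally justify, via density of elementary tensors or the iterated (Fubini) formula for $T$, the vanishing on the box with $k$th side $I$ that the paper simply asserts; the cutoff $\chi$ in your last paragraph is unnecessary, since the iterated formula already gives the vanishing on that whole box.
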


\begin{proof}
If $x_k\notin\supp T_k$ for some $k$, there is a neighborhood $V_k$ of
$x_k$ on which $T_k$ vanishes.  Hence the tensor product vanishes on every
product neighborhood having $V_k$ as its $k$th factor, which proves the
inclusion from left to right in \eqref{eq:tensor-product-support}.

Conversely, let $x_k\in\supp T_k$ for every $k$ and let $V$ be any
neighborhood of $x=(x_1,\ldots,x_d)$.  Choose product neighborhoods
$V_1\times\cdots\times V_d\subset V$ with $x_k\in V_k$.  By the definition
of distributional support, there are $\varphi_k\in C_c^\infty(V_k)$ such
that $\langle T_k,\varphi_k\rangle\ne0$.  Therefore
\[
 \left\langle T_1\otimes\cdots\otimes T_d,
 \prod_{k=1}^d\varphi_k\right\rangle
 =\prod_{k=1}^d\langle T_k,\varphi_k\rangle\ne0.
\]
Thus the tensor product does not vanish on $V$, so
$x\in\supp(T_1\otimes\cdots\otimes T_d)$.
\end{proof}

\begin{ex}[An infinite-dimensional cone with nonseparable elements]\label{ex:positive-density}
Let $w\ge0$ be any locally integrable function on $\Rp^d$ such that
\[
 \int_{\Rp^d}\prod_{k=1}^d\min\{r_k,t_k\}w(t)\,dt<\infty
 \quad\text{for every }r\in\Rp^d,
\]
and define
\begin{equation}\label{eq:density-tail}
 g_w(x)=\int_{\Qt(x)}w(t)\,dt.
\end{equation}
Then $\mu_{g_w}=w(t)dt$ and $g_w\in\mathrm{RMGM}(1,\one)$.  The density $w$ is arbitrary and need not factor into one-dimensional functions.  If $w$ is not equal almost everywhere to a product of one-dimensional densities, Lemma~\ref{lem:tensor-product-derivative}, together with \eqref{eq:mixed-derivative-measure}, shows that $g_w$ is not a tensor product.  Thus the class contains an infinite-dimensional collection of genuinely nonseparable functions.

For example, if $w\in C_c^\infty(\Rp^d)$ is nonnegative and non-product, then $g_w$ extends smoothly to the closed orthant and its coordinatewise even extension is compactly supported and nonseparable.  Another explicit choice is
\[
 w(t)=\mathbf1_{\{t_1+\cdots+t_d<1\}},
 \qquad
 g_w(x)=\frac{(1-x_1-\cdots-x_d)_+^d}{d!}.
\]
\end{ex}

The class is not confined to the coordinatewise monotone cone.

\begin{prop}[A real nonmonotone nonseparable family]\label{prop:atomic-family}
Let $d\ge2$, let $a,b\in\Rp^d$ satisfy $a_k<b_k$ for every $k$, and let $0<c<1$.  Put
\begin{equation}\label{eq:atomic-measure}
 \mu=\delta_b-c\delta_a,
 \qquad
 g_{a,b,c}(x)
 =\mathbf1_{\{0<x_k\le b_k\,\text{ for all }k\}}(x)
 -c\mathbf1_{\{0<x_k\le a_k\,\text{ for all }k\}}(x).
\end{equation}
Then
\[
 g_{a,b,c}\in\mathrm{RMGM}\left(\frac{1+c}{1-c},\one\right).
\]
The function is nonnegative but is not nonincreasing in any coordinate for which $a_k<b_k$.  It is also not a tensor product when at least two coordinate inequalities are strict.
\end{prop}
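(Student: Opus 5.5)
The plan is to verify the three assertions directly from the explicit structure of $\mu=\delta_b-c\delta_a$, using the earlier lemmas for the nonseparability claim.

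\emph{Membership in the class.} First I check that $g_{a,b,c}$ is an upper-tail potential with representing measure $\mu$. The point $b$ lies in $\Qt(x)$ exactly when $x_k\le b_k$ for all $k$, and similarly for $a$. Hence $\mu(\Qt(x))$ coincides pointwise with \eqref{eq:atomic-measure}; this is already the canonical tail version. The moment condition \eqref{eq:mixed-moment-finite} is trivial for a finite atomic measure, since
\[
 \mathcal M_g(r)=P(r)+cQ(r),\qquad P(r)=\prod_{k}\min\{r_k,b_k\},\quad Q(r)=\prod_k\min\{r_k,a_k\},
\]
using $|\mu|=\delta_b+c\delta_a$. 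Because $a\le b$ componentwise, the set $\{x\le a\}$ is contained in $\{x\le b\}$. Therefore $g=1-c>0$ on $\Q(a)$, $g=1$ on $\{x\le b\}\setminus\{x\le a\}$, and $g=0$ elsewhere; in particular $g\ge0$. Then
\[
 \int_{\Q(r)}|g|\,dx=P(r)-cQ(r).
\]
Since $0\le Q(r)\le P(r)$, we get $\mathcal M_g(r)\le(1+c)P(r)$ and $\int_{\Q(r)}|g|\ge(1-c)P(r)$. This gives \eqref{eq:rmgm} with $K=(1+c)/(1-c)$ and $\lambda=\one$.

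\emph{Failure of monotonicity.} Fix a coordinate $k$ with $a_k<b_k$, and freeze the other coordinates $x_j\in(0,a_j)$ for $j\ne k$, which is a set of positive measure. Along the $k$-th coordinate, $g$ equals $1-c$ on $(0,a_k]$ and $1$ on $(a_k,b_k]$. So $g$ strictly increases there, and this happens on a set of positive $d$-dimensional measure. Hence no almost-everywhere modification of $g$ is nonincreasing in $x_k$.

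\emph{Nonseparability.} This is the step needing the most care. Suppose $g=\prod_k g_k$ almost everywhere, and say $a_1<b_1$ and $a_2<b_2$. Since $g\ne0$ lies in $L^1_{\mathrm{loc}}$, the remark in Section~\ref{sec:hardy} lets me take every $g_k$ nonzero in $L^1_{\mathrm{loc}}(0,\infty)$. By Lemma~\ref{lem:intrinsic-mixed-derivative} and Lemma~\ref{lem:tensor-product-derivative},
\[
 (-1)^d\mu=Dg_1\otimes\cdots\otimes Dg_d,
\]
a nonzero Radon measure, so every $Dg_k$ is a nonzero distribution. Lemma~\ref{lem:tensor-product-support} then gives
\[
 \{a,b\}=\supp\mu=\prod_k\supp Dg_k .
\]
The factor $\supp Dg_1$ contains $a_1$ and $b_1$, and $\supp Dg_2$ contains $a_2$ and $b_2$, since the projections of $\{a,b\}$ must lie in the factors. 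Therefore the product contains a point whose first coordinate is $a_1$ and second coordinate is $b_2$. That point differs from $a$ because $b_2\ne a_2$, and from $b$ because $a_1\ne b_1$, which is a contradiction. The only subtlety I expect is confirming that all factors are nonzero so that the support lemma applies; this is guaranteed because $\mu\ne0$, which holds since $c<1$ and $a\ne b$.
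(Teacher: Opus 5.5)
Your proof is correct. Membership in $\mathrm{RMGM}((1+c)/(1-c),\one)$ and the failure of monotonicity are handled essentially as in the paper; your separate bounds $\mathcal M_g\le(1+c)P$ and $\int_{\Q(r)}|g|\ge(1-c)P$ are the paper's ratio estimate $(A+cB)/(A-cB)\le(1+c)/(1-c)$ written differently.

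The genuine difference is the nonseparability step. The paper argues elementarily. It takes four product rectangles, with $x_i$ in $(0,a_i)$ or $(a_i,b_i)$, $x_j$ in $(0,a_j)$ or $(a_j,b_j)$, and all remaining coordinates in $(0,a_k)$. On these rectangles $g$ takes the values $1-c,1,1,1$. An almost-everywhere tensor factorization would force, via Fubini, the rank-one identity $g(x_i^-,x_j^-)g(x_i^+,x_j^+)=g(x_i^-,x_j^+)g(x_i^+,x_j^-)$, i.e.\ $1-c=1$.

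You instead pass to the mixed derivative. Lemma~\ref{lem:intrinsic-mixed-derivative} and Lemma~\ref{lem:tensor-product-derivative} turn a factorization into $(-1)^d\mu=Dg_1\otimes\cdots\otimes Dg_d$. Lemma~\ref{lem:tensor-product-support} then forces $\supp\mu=\{a,b\}$ to be a Cartesian product, which fails because $a$ and $b$ differ in two coordinates. This is exactly the mechanism the paper reserves for Corollary~\ref{cor:smooth-family}.

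What each route buys: the paper's argument is self-contained. It needs only Fubini and the explicit values of $g$, and $c>0$ enters visibly as $1-c\ne1$. Yours relies on heavier distributional tools: the tensor-product characterization of distributions and the reduction to nonzero $L^1_{\mathrm{loc}}$ factors. In return it is structural. It shows that any upper-tail potential whose representing measure has non-product support is nonseparable, so one argument covers both the atomic and the smooth families. In your route, $c>0$ enters only through $\supp\mu=\{a,b\}$.
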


\begin{proof}
The upper-tail potential of $\delta_b-c\delta_a$ is exactly \eqref{eq:atomic-measure}.  For $r\in\Rp^d$, write
\[
 A(r)=\prod_{k=1}^d\min\{r_k,b_k\},
 \qquad
 B(r)=\prod_{k=1}^d\min\{r_k,a_k\}.
\]
Since $0\le B(r)\le A(r)$,
\[
 \mathcal M_{g_{a,b,c}}(r)=A(r)+cB(r),
 \qquad
 \int_{\Q(r)}|g_{a,b,c}(x)|\,dx=A(r)-cB(r).
\]
Therefore
\[
 \frac{\mathcal M_{g_{a,b,c}}(r)}{\int_{\Q(r)}|g_{a,b,c}|}
 =\frac{A(r)+cB(r)}{A(r)-cB(r)}
 \le\frac{1+c}{1-c}.
\]

Fix all coordinates except $x_k$ below the corresponding components of $a$.  As $x_k$ crosses $a_k$, the value of $g_{a,b,c}$ increases from $1-c$ to $1$, proving failure of coordinatewise monotonicity.  To see nonseparability, choose two strict coordinates $i\ne j$, intervals $I_i^-\subset(0,a_i)$ and $I_i^+\subset(a_i,b_i)$, intervals $I_j^-\subset(0,a_j)$ and $I_j^+\subset(a_j,b_j)$, and, in every remaining coordinate, an interval contained in $(0,a_k)$.  On the resulting four product rectangles the values are respectively $1-c,1,1,1$.  If $g_{a,b,c}$ were a tensor product almost everywhere, Fubini's theorem would imply for almost every associated quadruple the rank-one identity
\[
 g(x_i^-,x_j^-)g(x_i^+,x_j^+)
 =g(x_i^-,x_j^+)g(x_i^+,x_j^-),
\]
with the remaining coordinates fixed at common admissible values.  This would give $(1-c)\cdot1=1\cdot1$, a contradiction.
\end{proof}

\begin{cor}[Smooth nonmonotone nonseparable examples]\label{cor:smooth-family}
Let $d\ge2$, let $0<c<1$, and choose a nonzero product bump
\[
 \phi(z)=\prod_{k=1}^d\phi_k(z_k),
 \qquad
 \phi_k\in C_c^\infty((0,\varepsilon)),
 \qquad \phi_k\ge0.
\]
Let $a,b\in\Rp^d$ satisfy $b_k-a_k>\varepsilon$ for every $k$.  Denote by $\nu_v$ the measure with density $\phi(t-v)$, extended by zero outside $v+(0,\varepsilon)^d$, and put
\[
 \mu=\nu_b-c\nu_a,
 \qquad
 g(x)=\mu(\Qt(x)).
\]
Then
\[
 g\in\mathrm{RMGM}\left(\frac{1+c}{1-c},\one\right).
\]
Moreover, $g$ extends smoothly to the closed orthant, its even extension belongs to $C_c^\infty(\R^d)$, and $g$ is neither coordinatewise nonincreasing nor a tensor product.
\end{cor}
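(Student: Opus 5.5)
We argue exactly as for Proposition~\ref{prop:atomic-family}, now with smoothed atoms.

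\emph{Step 1: the RMGM bound.} Since $\phi\ge0$, both $\nu_a$ and $\nu_b$ are positive measures. Their supports $a+[0,\varepsilon]^d$ and $b+[0,\varepsilon]^d$ are disjoint, because $b_k-a_k>\varepsilon$. Hence $|\mu|=\nu_b+c\nu_a$. Put
\[
 A(r)=\int\prod_k\min\{r_k,t_k\}\,d\nu_b(t),
 \qquad
 B(r)=\int\prod_k\min\{r_k,t_k\}\,d\nu_a(t).
\]
Both moments are finite, so $g$ is an upper-tail potential. Lemma~\ref{lem:integrated-variation} gives $\mathcal M_g(r)=A(r)+cB(r)$.

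For the lower bound we need a pointwise comparison. For every $x$, every $t\in\supp\nu_a$ is dominated componentwise by its translate $t+(b-a)\in\supp\nu_b$. Therefore
\[
 \nu_a(\Qt(x))=\int\mathbf 1_{\{t\ge x\}}\phi(t-a)\,dt
 \le\int\mathbf 1_{\{t+(b-a)\ge x\}}\phi(t-a)\,dt=\nu_b(\Qt(x)).
\]
Consequently
\[
 g(x)=\nu_b(\Qt(x))-c\,\nu_a(\Qt(x))\ge(1-c)\,\nu_b(\Qt(x))\ge0.
\]
Integrating over $\Q(r)$ with Lemma~\ref{lem:integrated-variation} gives $\int_{\Q(r)}|g|=A(r)-cB(r)$. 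Integrating the same domination gives $B(r)\le A(r)$.

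The ratio bound from Proposition~\ref{prop:atomic-family} then yields $g\in\mathrm{RMGM}\bigl(\tfrac{1+c}{1-c},\one\bigr)$. If $A(r)=0$, then $B(r)=0$ as well and the inequality is trivial.

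\emph{Step 2: smoothness and compact support.} Write $g=g_b-c\,g_a$ with $g_v(x)=\prod_k\Phi_k(x_k-v_k)$, where $\Phi_k(s)=\int_s^\infty\phi_k$. Each $\Phi_k$ is smooth and equals the constant $\|\phi_k\|_1$ for $s\le0$. Since $v_k>0$, every $g_v$ is constant near each coordinate hyperplane, including near the origin. It therefore extends smoothly to the closed orthant, and its even extension is smooth across the hyperplanes because all normal derivatives vanish there. The extension is compactly supported since $\Phi_k(s)=0$ for $s\ge\varepsilon$. So the even extension of $g$ lies in $C_c^\infty(\R^d)$.

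\emph{Step 3: failure of monotonicity.} Fix $x_j<a_j$ for all $j\ne k$. Then along the $x_k$ direction
\[
 g(x)=P_b-c\,P_a\,\Phi_k(x_k-a_k),
\]
where $P_b,P_a>0$ are the constant products of the other factors of $g_b$ and $g_a$. The first term is constant for $x_k\le b_k$ because $\Phi_k(x_k-b_k)=\|\phi_k\|_1$ there. As $x_k$ runs from $a_k$ to $a_k+\varepsilon<b_k$, the factor $\Phi_k(x_k-a_k)$ decreases strictly somewhere. Hence $g$ strictly increases in $x_k$, so $g$ is not coordinatewise nonincreasing.

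\emph{Step 4: nonseparability (the main obstacle).} I would use Lemma~\ref{lem:tensor-product-derivative}. Suppose $g$ were a tensor product. By Lemma~\ref{lem:intrinsic-mixed-derivative}, $\mu$ is a nonzero absolutely continuous measure, so its density $m=\phi(\cdot-b)-c\,\phi(\cdot-a)$ would equal a product $\prod_k v_k(x_k)$ almost everywhere. Then $\supp m$ would be a product set, by Lemma~\ref{lem:tensor-product-support}.

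But $\supp m$ is the disjoint union of two boxes $\prod_k(a_k+S_k)$ and $\prod_k(b_k+S_k)$, where $S_k=\supp\phi_k$. A product set containing both would also contain the mixed point $(a_1+s_1,\,b_2+s_2,\ldots)$. This point lies in neither box, since in coordinate $1$ it belongs only to the $a$-box and in coordinate $2$ only to the $b$-box; this is where disjointness of the translates, from $b_k-a_k>\varepsilon$, is used. This contradiction shows $g$ is not a tensor product. The step needing care is matching the a.e.\ density identity with topological supports, which I would handle by continuity of $m$.
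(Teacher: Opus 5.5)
Your proof is correct and follows the paper's argument essentially step for step. Disjoint supports give $|\mu|=\nu_b+c\nu_a$. The translation domination $\nu_a(\Qt(x))\le\nu_b(\Qt(x))$ yields $g\ge0$ and $B\le A$, which gives the same ratio bound. Smoothness follows from independence of $x_k$ near each hyperplane. Nonmonotonicity follows from fixing the other coordinates below $a$ and crossing the $a$-bump. Nonseparability follows from Lemma~\ref{lem:tensor-product-support} and the mixed point outside both boxes.

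The only difference is that you route Step 4 through the density factorization of Lemma~\ref{lem:tensor-product-derivative}. The support lemma can instead be applied directly to the distributional identity $(-1)^d\mu=Dg_1\otimes\cdots\otimes Dg_d$, with nonzero factors. Doing so makes your caveat about matching a.e.\ densities with topological supports unnecessary.
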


\begin{proof}
Write $G_v(x)=\nu_v(\Qt(x))$ and
\[
 M_v(r)=\int_{\Rp^d}\prod_{k=1}^d\min\{r_k,t_k\}\,d\nu_v(t).
\]
Because $b>a$ componentwise, $G_a\le G_b$ and $M_a\le M_b$.  The two translated supports are disjoint, so $|\mu|=\nu_b+c\nu_a$.  Consequently $g=G_b-cG_a\ge0$ and
\[
 \mathcal M_g(r)=M_b(r)+cM_a(r),
 \qquad
 \int_{\Q(r)}g(x)\,dx=M_b(r)-cM_a(r).
\]
The same ratio estimate as in Proposition~\ref{prop:atomic-family} proves the RMGM bound.

The supports are separated from the coordinate hyperplanes, so $g$ is constant near each such hyperplane; hence its even extension is smooth.  If all coordinates except $x_k$ are fixed below the support of $\nu_a$, then $G_b$ remains constant while $G_a$ decreases as $x_k$ crosses the first translated bump.  Thus $g$ increases there.  Finally, the mixed derivative density is supported on the union of two separated Cartesian-product compact sets.  By Lemma~\ref{lem:tensor-product-support}, the support of a nonzero tensor-product distribution is the Cartesian product of the supports of its one-dimensional factors, which would also contain nonempty cross product sets.  Hence the mixed derivative, and therefore $g$, cannot be a tensor product.
\end{proof}

For the even extension of \eqref{eq:atomic-measure}, the ordinary Fourier transform is explicit:
\begin{equation}\label{eq:atomic-transform}
 \widehat f_{a,b,c}(\xi)
 =2^d\left[
 \prod_{k=1}^d\frac{\sin(b_k\xi_k)}{\xi_k}
 -c\prod_{k=1}^d\frac{\sin(a_k\xi_k)}{\xi_k}
 \right].
\end{equation}
For each $k$ and each $\ell\in\{a_k,b_k\}$, the quotient $\sin(\ell\xi_k)/\xi_k$ in \eqref{eq:atomic-transform} is interpreted continuously at $\xi_k=0$, with value $\ell$.  Thus, for every fixed $c\in(0,1)$, Theorem~\ref{thm:main} produces a dilation-balanced Pitt inequality which is uniform in $a$ and $b$ for this nonmonotone and nonseparable family; the constant depends on $c$ through the RMGM constant $(1+c)/(1-c)$.

\begin{cor}[Exact finite-norm range for compactly supported examples]\label{cor:sharp-range}
Let $g$ be one of the following: a positive-density example $g_w$ from Example~\ref{ex:positive-density} with $w$ nonzero, integrable, and compactly supported; a function from Proposition~\ref{prop:atomic-family}; or a function from Corollary~\ref{cor:smooth-family}.  Let $f$ be its even extension.  Then both norms in \eqref{eq:main-pitt} are finite exactly in the natural range
\begin{equation}\label{eq:natural-range}
 -\frac1{q_k}<\alpha_k<1-\frac1{q_k},
 \qquad k=1,\ldots,d,
\end{equation}
where $\beta$ is given by \eqref{eq:main-beta}.  At the lower endpoint the Fourier-side norm diverges, while at the upper endpoint the spatial weighted norm diverges.
\end{cor}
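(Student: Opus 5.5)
The plan is to analyse the two norms separately, reducing each to one-dimensional power integrals near $0$ and near $\infty$. By Fubini and the definition of the mixed norm, a product weight integrates coordinate by coordinate, so divergence or convergence is decided coordinatewise.

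\emph{Spatial side.} In all three cases $g$ is bounded, vanishes outside a bounded rectangle $\Q(R)$, and satisfies $g\ge c_0>0$ on a small cube $\Q(\eta)$ near the origin. For $g_w$ with $w$ nonzero, integrable and compactly supported, $g_w(x)\to\int w>0$ as $x\to0$. For the atomic and smooth examples, $g=1-c$ (respectively $\int\phi\,(1-c)$) near the origin. Hence
\[
 c_0\one_{\Q(\eta)}\le|g|\le C\one_{\Q(R)},
\]
so $\|\Pi_\beta f\|_{L_{\vec p}}$ is finite if and only if each $\int_0^1x^{\beta_kp_k}\,dx<\infty$. This holds exactly when $\beta_k>-1/p_k$, that is, when $\alpha_k<1-1/q_k$. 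The norm diverges at the upper endpoint.

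\emph{Fourier side: finiteness.} Use Proposition~\ref{prop:mixed-transform-formula} with $|\sin u|\le\min\{|u|,1\}$. Since $|\mu_g|$ is a finite measure supported in $\Q(R)$ (for the smooth example the support is slightly enlarged but still bounded),
\[
 |\widehat f_A(\xi)|\le C\prod_k\min\{1,|\xi_k|^{-1}\}.
\]
The weighted $L_{\vec q}$ norm then factors into products of $\int_0^1\xi^{\alpha_kq_k}\,d\xi$ and $\int_1^\infty\xi^{(\alpha_k-1)q_k}\,d\xi$. These are finite exactly when $-1/q_k<\alpha_k<1-1/q_k$.

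\emph{Fourier side: divergence at the lower endpoint.} This is the main obstacle: it needs a lower bound, not merely an upper bound. Suppose $\alpha_k\le-1/q_k$ for some $k$. The transform is continuous at the origin, because the integrand $\prod_k\sin(t_k\xi_k)/\xi_k$ extends continuously with value $\prod_k t_k$ and is dominated by $\prod_k t_k\in L^1(|\mu_g|)$ on the bounded support. Its value there is
\[
 \widehat f_A(0)=2^d\int\prod_k t_k\,d\mu_g=2^d\int_{\Rp^d}g(x)\,dx
\]
by Lemma~\ref{lem:integrated-variation}-type Fubini. This is strictly positive: $g\ge0$ and $g$ is nonzero in all three cases, with nonnegativity shown in Proposition~\ref{prop:atomic-family} and Corollary~\ref{cor:smooth-family}.

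Hence $|\widehat f_A|\ge\tfrac12|\widehat f_A(0)|$ on a small cube $[-\delta,\delta]^d$. Restricting the mixed norm to this cube, it is bounded below by a positive constant times $\prod_j\bigl(\int_0^\delta\xi^{\alpha_jq_j}\,d\xi\bigr)^{1/q_j}$. Every factor is positive, and the $k$-th factor is infinite. When $\alpha_j>-1/q_j$ for the other coordinates, the remaining factors are finite and positive; otherwise they are also infinite, which only strengthens the conclusion. The one care point is the inner-to-outer ordering of the mixed norm, but with a product lower bound on a product set Tonelli makes this routine.

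Combining the three parts shows that both norms are finite precisely in the range~\eqref{eq:natural-range}. The Fourier-side norm diverges at the lower endpoint, and the spatial norm diverges at the upper endpoint.
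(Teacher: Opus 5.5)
Your proposal is correct and follows essentially the same route as the paper: the two-sided bound $c_0\one_{\Q(\eta)}\le|g|\le C\one_{\Q(R)}$ for the spatial norm, the majorant $\prod_k\min\{1,|\xi_k|^{-1}\}$ from the mixed integration formula for Fourier-side finiteness, and positivity of the transform near the frequency origin for divergence at the lower endpoint. The one difference is cosmetic: you get continuity at $0$ from the sine-kernel integral, whose value there is $2^d\int g=\int f>0$, while the paper gets it from $f\in L^1$. Since the paper sets $\widehat f_A=0$ on the frequency hyperplanes, you should state this through the continuous extension of that integral; it agrees with $\widehat f_A$ almost everywhere, so it gives the same mixed-norm lower bound.
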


\begin{proof}
In all three cases the representing measure is finite and compactly supported.  Choose $R_k>0$ so that
$\supp\mu_g\subset\prod_{k=1}^d[0,R_k]$.  In the positive-density case, nontriviality of $w$ and the identity $\Rp^d=\bigcup_{n=1}^\infty[1/n,n]^d$ imply that some rectangle bounded away from the coordinate hyperplanes carries positive $w$-mass.  Proposition~\ref{prop:mixed-transform-formula} and \eqref{eq:pointwise-mixed-bound} give
\begin{align*}
 |\widehat f(\xi)|
 &\le 2^d|\mu_g|(\Rp^d)
 \prod_{k=1}^d\min\{R_k,|\xi_k|^{-1}\}\\
 &\le C_R\prod_{k=1}^d\min\{1,|\xi_k|^{-1}\}.
\end{align*}
Hence the Fourier-side mixed norm is finite whenever
$-1/q_k<\alpha_k<1-1/q_k$ for every $k$.  Conversely, each example is nonnegative, integrable, and nonzero, so $\widehat f$ is continuous and $\widehat f(0)>0$.  Therefore there are $c_0,\delta>0$ such that $|\widehat f(\xi)|\ge c_0$ on $(-\delta,\delta)^d$.  Restricting the mixed norm to this box and evaluating the resulting product of one-dimensional power integrals forces $\alpha_k>-1/q_k$ for every $k$.

Each function $f$ is bounded and compactly supported.  Moreover, there are $c_1,\delta_1>0$ such that $f(x)\ge c_1$ on $(-\delta_1,\delta_1)^d$: for $g_w$, choose $\delta_1>0$ so that $\int_{[\delta_1,\infty)^d}w(t)\,dt>0$; for the atomic and smooth families, take $\delta_1$ below the lower translated support.  It follows that the spatial weighted mixed norm is finite exactly when
$p_k\beta_k>-1$ for every $k$.  By \eqref{eq:main-beta}, this is equivalent to
$\alpha_k<1-1/q_k$.  At equality the corresponding one-dimensional integral diverges logarithmically.
\end{proof}

\section{Sharpness of the parameter ranges}\label{sec:sharpness}

We record explicitly what is sharp in Theorem~\ref{thm:main} and in the examples above.  There are three different restrictions, and they have different origins.

First, the relation
\[
        \beta_k=1-\frac1{p_k}-\frac1{q_k}-\alpha_k
\]
is forced by independent coordinate dilations.  This is Proposition~\ref{prop:scaling}.  The argument is insensitive to the particular form of the RMGM condition: once a class is stable under the dilations
\[
        g(x)\mapsto g(s_1x_1,\ldots,s_dx_d),
\]
any uniform estimate with product weights must have exactly this balance in each coordinate.  Thus the theorem does not merely produce a sufficient exponent relation; it produces the only possible coordinatewise balance compatible with dilation invariance.

Second, the lower restrictions
\[
        \alpha_k>-\frac1{q_k}
\]
are forced at the frequency origin for the natural compactly supported examples.  In Corollary~\ref{cor:sharp-range}, the even extensions are nonnegative, integrable, and not identically zero.  Hence their Fourier transforms are continuous and nonzero at the origin.  Restricting the mixed norm to a small product neighborhood of $0$ gives precisely the one-dimensional integrability condition
\[
        \int_0^\delta \xi_k^{\alpha_kq_k}\,d\xi_k<\infty,
\]
which is equivalent to $\alpha_k>-1/q_k$.  Failure in one coordinate already makes the full mixed norm divergent.

Third, the upper restrictions
\[
        \alpha_k<1-\frac1{q_k}
\]
are forced, for the same compact examples, by the spatial norm near the coordinate hyperplanes.  Since
\[
        \beta_k=1-\frac1{p_k}-\frac1{q_k}-\alpha_k,
\]
the condition $p_k\beta_k>-1$ is equivalent to $\alpha_k<1-1/q_k$.  The examples in Corollary~\ref{cor:sharp-range} are bounded below by a positive constant near the origin, so this condition is necessary and sufficient for the spatial weighted norm.  At the endpoint one obtains logarithmic divergence.

It is important that Theorem~\ref{thm:main} itself is stated with the assumption
\[
        \Pi_\beta f\in L_{\vec p}(\R^d),
\]
rather than with a separate upper bound on $\alpha$.  This is the natural formulation for a weighted inequality: if a special function has additional vanishing near some coordinate hyperplane, the right-hand side may still be finite beyond the compact-example range.  Corollary~\ref{cor:sharp-range} says that for the basic compactly supported model families, which are positive near the origin, the simultaneous finite-norm range is exactly
\[
        -\frac1{q_k}<\alpha_k<1-\frac1{q_k}
        \qquad (k=1,\ldots,d).
\]
Thus the theorem is sharp in the dilation balance and the examples show sharpness of the natural endpoint range.

The restriction $p_k\le q_k$ is sharp for the Hardy mechanism used in the proof.  Indeed, in the logarithmic variables of Lemma~\ref{lem:power-hardy}, the one-dimensional operator is
\[
 S_\delta H(s)=\int_{-\infty}^s e^{-\delta(s-u)}H(u)\,du.
\]
For $N>1$, take $H_N=\mathbf1_{[0,N]}$.  If $1\le s\le N$, then
\[
 S_\delta H_N(s)
 \ge\int_{s-1}^s e^{-\delta(s-u)}\,du
 =\frac{1-e^{-\delta}}{\delta}.
\]
Consequently,
\[
 \|S_\delta H_N\|_{L^q(\R)}\gtrsim N^{1/q},
 \qquad
 \|H_N\|_{L^p(\R)}=N^{1/p}.
\]
A uniform $L^p\to L^q$ bound therefore forces $1/q\le1/p$, equivalently
$p\le q$.  Conversely, Lemma~\ref{lem:power-hardy} holds throughout the full
range $1\le p\le q<\infty$, including the endpoint $p=1$, and the mixed
estimate follows by tensorization.  Testing the mixed operator on product
functions reduces necessity in each coordinate to the preceding one-dimensional
argument.  Thus $1\le p_k\le q_k<\infty$ is the exact finite-exponent range
for the rectangular Hardy route.  Different
hypotheses or additional cancellation might lead to other estimates, but not
to the dilation-balanced theorem proved here by this mechanism.

\section{Relation with one-dimensional and radial general-monotone theory}\label{sec:one-dimensional-radial}

The construction is intended as a product analogue of the general-monotone Fourier theory in one dimension and of the radial general-monotone theory.  In one dimension, an upper-tail potential has the form
\[
        g(x)=\mu_g([x,\infty)),
        \qquad x>0,
\]
and the identity
\[
        \mu_g=-Dg
\]
is the usual distributional way of saying that the variation measure controls the decrease of $g$.  If $\mu_g$ is positive, then $g$ is nonnegative and nonincreasing.  If $\mu_g$ is sectorial or signed with controlled cancellation, the condition is a direct analogue of the general-monotone principle: monotonicity is replaced by control of variation on appropriate intervals.  The Abel formula in Proposition~\ref{prop:mixed-transform-formula} reduces in this case to the familiar sine-kernel representation
\[
        \widehat f_A(\xi)=2\int_0^\infty \frac{\sin(t\xi)}{\xi}\,d\mu_g(t),
\]
which is one of the standard mechanisms behind one-dimensional Boas--Pitt estimates for monotone or general-monotone data \cite{LT08,LT09,LT11,DGT13}.

The multiparameter theory differs from a mere iteration of the one-dimensional statement in one essential respect.  The representing measure $\mu_g$ need not be a product measure.  Thus the full mixed derivative may couple all coordinates.  The examples in Section~\ref{sec:examples} exploit exactly this point: positive densities may be genuinely nonseparable, and signed atomic or smooth examples may fail coordinatewise monotonicity.  Nevertheless, the full mixed derivative still supplies one cancellation factor in every coordinate, and the RMGM condition controls the corresponding upper-tail total variation by a rectangular average of $|g|$.  This is the product-space replacement for the interval variation control used in the one-dimensional general-monotone theory.

The relation with radial general-monotone results is parallel but not literal.  Radial Fourier inequalities are governed by balls, radial derivatives, and Bessel or Hankel kernels; see for example \cite{GLT11,DGT13}.  The present setting is governed by rectangles, full mixed derivatives, and product sine kernels.  Therefore Theorem~\ref{thm:main} is not a radial theorem in disguise and is not obtained by restricting a radial result to product variables.  Instead, it is an anisotropic counterpart of the same principle: a derivative or variation object adapted to the geometry gives the cancellation needed in the Fourier kernel, and a Hardy-type inequality converts the resulting variation bound into a weighted norm estimate.

For positive mixed derivatives, the theorem overlaps with the coordinatewise monotone anisotropic theory of \cite{Muk15,Muk17}.  On that cone, the exponent relation and the natural range are not new.  The contribution of the present formulation is that the same coordinatewise dilation balance survives under the intrinsic RMGM condition, which allows sectorial complex derivatives, signed derivatives with controlled cancellation, and nonseparable mixed derivative measures.  Thus the theorem should be read as a general-monotone extension of the anisotropic monotone Pitt theory: it keeps the sharp scaling law of the monotone case while replacing monotonicity itself by a robust mixed-variation condition.

\section{Concluding remarks}

The full mixed derivative supplies exactly one cancellation factor in every coordinate.  Rectangular mixed general monotonicity converts its total variation into a local Hardy average, and the anisotropic mixed-norm Pitt inequality follows by tensorizing one-dimensional power Hardy estimates.  The proof therefore separates three independent ingredients: mixed cancellation, a scale-invariant variation condition, and product Hardy boundedness.

The class is intrinsic because its representing measure is the full mixed distributional derivative.  It is nonseparable because the derivative measure may have arbitrary coupling among the coordinates, and it contains functions outside the coordinatewise monotone cone.  In this sense the result is a natural product analogue of the general-monotone extension principle: retain the sharp dilation law of the monotone theory, but replace monotonicity itself by a robust variation condition.  The coordinatewise balance \eqref{eq:main-beta} is necessary by dilation, while compactly supported examples show that \eqref{eq:natural-range} is the exact simultaneous finite-norm range for functions which are bounded below by a positive constant in a neighborhood of the origin.

Natural further questions include reverse inequalities, Lorentz refinements, and criteria for \eqref{eq:rmgm} in terms of lower-order Hardy--Krause variations.  Such developments would connect the present mixed-tail class more directly with the two-sided Boas theory and with the broader general-monotone program.

\section*{Data availability}
No data were used or generated in the research described in this article.

\end{document}